 \theoremstyle{plain}
 \newtheorem{theorem}{Theorem}[section]
 \newtheorem{lemma}{Lemma}[section]
 \newtheorem{proposition}{Proposition}[section]
\theoremstyle{definition}
 \newtheorem{definition}{Definition}[section]
 \newtheorem{example}{Example}[section]
 \newtheorem{remark}{Remark}
\theoremstyle{remark}
\journal{arXiv}
\begin{document}

\begin{frontmatter}
 \title{Submartingale property of set-valued stochastic integration associated with Poisson process and related integral equations on Banach spaces\tnoteref{label0}}
 \tnotetext[label0]{This a revised version of the manuscript posted in arXiv with Id number 2002.09220. This work is partly supported by Beijing National Science Foundation (1192015), the Construct Program of the Key Discipline in Hunan Province and State Scholarship Fund of CSC.}
\author[label1]{Jinping Zhang}
 \address[label1]{Department of Mathematics and Physics, North China Electric Power University,
Beijing, 102206, P.R.China}
\ead{zhangjinping@ncepu.edu.cn}
\author[label3]{Itaru Mitoma}
\address[label3]{Department of Mathematics, Saga University, Saga, 840-8502, Japan}
\ead{mitoma@ms.saga-u.ac.jp}
\author[label4]{Yoshiaki Okazaki}
\address[label4]{Department of Systems Design and Informatics, Kyushu
Institute of Technology, Iizuka, 820-8502, Japan}
\ead{okazaki@flsi.or.jp}
\begin{abstract}
In an M-type 2 Banach space, firstly we explore some properties of
the set-valued stochastic integral associated with the stationary Poisson point
process. By using the Hahn decomposition theorem and bounded linear functional, we obtain the main result: the integral of a set-valued stochastic process with respect to the compensated Poisson measure is a set-valued submartingale but not a martingale unless the integrand degenerates into a single-valued process. Secondly we study the strong solution to the set-valued
stochastic integral equation, which includes a set-valued drift,
a single-valued diffusion driven by a Brownian motion and the
set-valued jump driven by a Poisson process.

\end{abstract}

\begin{keyword}
 Set-valued stochastic integration  \sep Set-valued submartingale  \sep Poisson process
\MSC Primary 65C30 \sep Secondary 26E25 \sep 54C65
\end{keyword}

\end{frontmatter}


\section{Introduction}
\label{author_sec:1}
Set-valued stochastic calculus is the natural extension of single-valued case.  Aumann \cite{Aum}(1965) defined the expectation of set-valued random variables. Hiai and Umegaki \cite{Hia} (1977) gave the definition of set-valued conditional expectation, set-valued martingale (or super/submartingale). After that research on set-valued stochastic integral and differential equation (or inclusion) has been received much attention.

 Kisielewicz \cite{Kis} (1997) studied the stochastic integral of set-valued
stochastic process with respect to Brownian motion in $d$-dimensional Euclidean space $\mathbb R^{d}$, where the integral is defined as a subset of $L^{2}(\Omega; \mathbb {R}^{d})$, which is called the trajectory integral. Kim and Kim \cite{Kim2}, Jun and Kim \cite{Jun}  defined the set-valued It$\hat{o}$ integral (different from the trajectory integral) with respect to Brownian motion by using an indirect method such that the integral is a set-valued stochastic process. After that, there are a lot of research  related to set-valued stochastic integrals. For example, Li and Ren \cite{Li2} considered the integral as a mapping from the product space $[0,+\infty)\times \Omega$ to the power set of $\mathbb{R}^{d}$, where the measurability is also considered in the sense of product $\sigma$-algebra generated by $[0,+\infty)\times \Omega$. Michta \cite{M.M} studied both set-valued integral and trajectory integral with respect to semimartingale with finite path variation in $\mathbb{R}^{d}$. In an M-type Banach space $\frak X$, Zhang et al. \cite{Zha1,2012,Zha4} considered the set-valued integrals with respect to Brownian motion, martingales and Poisson point processes respectively.

There are two ways to extend single-valued stochastic differential equation. One is differential inclusions (also being called multi-valued differential equations in some references). For example, the nice references written by J. Ren and J. Wu \cite{ren20091}, J. Ren et al. \cite{ren2010} studied stochastic differential inclusion with single-valued Brownian diffusion in $\mathbb{R}^{d}$. J. Ren and J. Wu \cite{ren2011} studied the differential inclusion with Brownian diffusion and Poisson jump in $\mathbb{R}^{d}$ as follows:
\begin{equation}\label{inclusion}
dX_{t}\in  -A(X_{t})dt+b(X_{t})dt+\sigma(X_{t})dB_{t}
 +\int_{Z_{0}}f(X_{t-},z)\tilde{N}(dtdz)+\int_{Z/Z_{0}}g(X_{t-},z)N(dtdz),
\end{equation}
where $A$ is a set-valued operator. Other mappings are single-valued.

Another way to extend the single-valued stochastic equation is to turn the inclusion `$\in$' in \eqref{inclusion} into an equality `$=$'. Here we call it a set-valued integral (or differential) equation. That is to say, considering the  solution $X(t)$  as a set-valued process.
     In $\mathbb{R}^{d}$ space, there are some references about set-valued differential equation without jump, e.g. \cite{Li2010,Ogura} etc.  In \cite{Mic}, the authors studied the set-valued equation driven by martingale, where the set-valued integral is the trajectory integral. In an M-type 2 Banach space, Zhang et al \cite{Zha3}, Mitoma et al \cite{Mitoma} explored the strong solutions to set-valued stochastic differential equations, where the diffusion part is single-valued since the set-valued integral with respect to Brownian motion may be unbounded a.s.

   The Poisson point process is a special kind of L$\acute{e}$vy process with a wide range of applications. It is important
 in both random mathematics (see e.g. \cite{Dett,Ike,Kunita2})
and applied fields (see e.g.\cite{Kunita2}). For convenience , we consider the stationary
Poisson process $\bf p$ with a finite characteristic measure $\nu$. Both of the Poisson random
measure $N(dsdz)$ (where $z\in Z$, the state space of $\bf p$) and
the compensated Poisson random measure $\tilde{N}(dsdz)$ are of
finite variation a.s., which is different from Brownian motion. Based on the work \cite{2012} and  \cite{Zha3}, in an M-type 2 Banach space $\frak X$, by using the Hahn decomposition theorem of a space and properties of the bounded linear functional, we shall prove that stochastic integrals
of set-valued predictable  processes with respect to $N(dsdz)$ and
$\tilde{N}(dsdz)$ are $L^{2}$-integrably bounded. The integral with respect to the compensated measure is a submartingale.  The last theorem (Theorem 3.7) in \cite{2012} states that the integral is a set-valued martingale. But unfortunately the integral is not a martingale unless the integrand degenerates into a single-valued stochastic process a.s. See Theorem \ref{thm:unnmartingale} in this paper.

Thanks to the integrable boundeness of set-valued stochastic integral with respect to Poisson point process with finite characteristic measure,  based on the work \cite{Mitoma,Zha3},  we can study the extended set-valued stochastic integral equations with set-valued Poisson jump and single-valued Brownian motion diffusion as follows:
 \begin{equation}\label{equation111}
X_{t}=cl\big\{X_{0}+\int_{0}^{t}a(s,X_{s})ds+\int_{0}^{t}b(s,X_{s})dB_{s}
+\int_{0}^{t+}\int_{Z}c(s,z,X_{s-})N(dzds)\big\},
\end{equation}
for $t\in[0,T]$ a.s. where $a(\cdot,\cdot)$ and $c(\cdot,\cdot)$ are set-valued and $b(\cdot,\cdot)$ is single-valued. $\{B_{t};t\geq 0\}$ is a real valued Brownian motion. The notation $cl$ stands for the closure in the Banach space $\frak X$.

This paper is organized as follows: Section \ref{author_sec:2} is about basic notations and auxiliary results related to the set-valued theory.
In Section \ref{author_sec:3}, firstly we review the stochastic
integrals for $\frak X$-valued $\mathscr{S}$-predictable processes
with respect to $N(dsdz)$ and $\tilde{N}(dsdz)$ as required later. Then we study the
stochastic integrals for set-valued $\mathscr{S}$-predictable
processes with respect to $N(dsdz)$ and $\tilde{N}(dsdz)$. Section \ref{author_sec:4}
  is about the existence and uniqueness of strong solution to  equation \eqref{equation111}. Section \ref{author_sec:5} is a concluding remark.

\section{Preliminaries}
\label{author_sec:2}
  Let $(\Omega, {\mathcal F}, \{\mathcal F_{t}\}_{t\geq 0}, P)$ be a filtered complete probability
space, in which the filtration $\{\mathcal F_{t}\}_{t\geq 0}$  satisfying the
usual condition. Let $\mathcal B(E)$ be the Borel field of a topological
space $E$, $(\frak X,\| \cdot \|)$ a real separable Banach space equipped
with the norm $\|\cdot\|$ and $\bf K(\frak X)$ ($\bf
K_{b}(\frak X)$, $\bf K_{c}(\frak X)$) the family of all nonempty
closed (resp. closed bounded, closed convex) subsets of $\frak X$.
 Let $1\leq p<+\infty$ and
 $L^{p}(\Omega, \mathcal F, P; \frak X)$
 (denoted briefly by $L^{p}(\Omega; \frak X)$)
 be the Banach space of equivalence classes of
$\frak X$-valued $\mathcal F$-measurable functions $f:
\Omega\rightarrow \frak X$ such that the norm
$ \|f\|_{p}=\Big\{\int_{\Omega}\|f(\omega)\|^{p}dP\Big\}^{1/p}$
is finite. An $\frak X$-valued function $f$ is called {\em
$L^{p}$-integrable} if $f\in L^{p}(\Omega; \frak X)$.

 A set-valued function $F: \Omega\rightarrow {\bf{K}}(\frak X)$ is said to be {\em measurable} if for any open set $O\subset \frak X$,
the inverse $F^{-1}(O):=\{\omega\in\Omega: F(\omega)\cap
O\neq\emptyset\}$ belongs to $\mathcal F$. Such a function $F$ is
called a {\em set-valued random variable}. Let $\mathcal
M\big(\Omega, \mathcal F, P; {\bf K}(\frak X)\big)$ be the family of
all set-valued random variables, which is briefly denoted by
$\mathcal M\big(\Omega; {\bf K}(\frak X)\big)$.

 For
any open subset $O\subset \frak X$, set
$Z_{O}:=\{E\in {\bf K}(\frak X): E\cap O\neq\emptyset\},$ and
 $\mathcal C:=\{Z_{O}: O\subset\frak X ,\ O \ is\ open\},$
 and let $\sigma(\mathcal C)$ be
the $\sigma$-algebra generated by $\mathcal C$.
 A set-valued function $F: \Omega\rightarrow \bf K(\frak X)$ is
measurable if and only if $F$ is $\mathcal F/ \sigma(\mathcal
C)$-measurable. By Kuratowski-Ryll-Nardzewski Selection Theorem (see e.g. \cite {Cha},
 page 509), any set-valued random variable  $F: \Omega \rightarrow {\bf K}(\frak X)$ admits an measurable selection $f$ such that $f(\omega)\in F(\omega)$ for each $\omega\in \Omega$.

For $A, B\in 2^{\frak X}$ (the power set of $\frak X$), $H(A, B)\geq
0$ is defined by
$$
H(A,B):= \max \{\sup_{x\in A}\inf_{y\in B}||x-y||, \sup_{y \in
B}\inf_{x\in A}||x-y||\},
$$
which is called the {\em
Hausdorff metric}. It is well-known that $\bf K_{b}(\frak X)$
equipped with the metric $H$ denoted by ($\big({\bf K}_{b}(\frak X),
H\big)$) is a complete metric space.

For $F\in \mathcal M\big(\Omega, {\bf K}(\frak X)\big)$, the family
of all $L^{p}$-integrable selections is defined by
$$
S^{p}_{F}(\mathcal F):=\{f\in L^{p}(\Omega, {\mathcal F}, P; \frak
X) : f (\omega)\in F(\omega) \
 a.s.\}.
$$
In the following, $S_{F}^{p}(\mathcal F)$ is denoted briefly by
$S_{F}^{p}$. If $S_{F}^{p}$ is nonempty, $F$ is said to be {\em
$L^{p}$-integrable}. $F$ is called {\em $L^{p}$-integrably bounded}
if there exits a function $h\in L^{p} (\Omega, {\mathcal F}, P;
\mathbb R)$ such that $\|x\|\leq h (\omega)$ for any $x$ and
$\omega$ with $x\in F(\omega)$. It is equivalent to that $\|F\|_{\bf
K}\in L^{p}(\Omega; \mathbb R)$, where $\|F(\omega)\|_{\bf
K}:=\sup\limits_{a\in F(\omega)}{\|a\|}$.
 The family of all measurable $\bf K(\frak X)$-valued  $L^{p}$-integrably bounded functions
 is denoted by $L^{p}\big(\Omega, \mathcal F, P; {\bf K}(\frak X)\big)$. Write it for
brevity as $L^{p}\big(\Omega; {\bf K} (\frak X)\big)$.

The {\em integral (or expectation)} of a set-valued random variable
$F$ was defined by Aumann in 1965 (\cite{Aum}):
\begin{equation*}
E[F]:=\{E[f]: f\in S_{F}^{1}\}.
\end{equation*}

Let $F\in \mathcal M(\Omega; \frak X)$, $1\leq p<+\infty$. Then $F$
is $L^{p}$-integrably bounded if and only if  $S_{F}^{p}$ is
nonempty and bounded in $L^{p}(\Omega;\frak X)$ (see e.g. \cite{Zha3} ).

Let $F_{1}, F_{2}\in \mathcal M(\Omega; \frak X)$
and $F(\omega)=cl(F_{1}(\omega)+F_{2}(\omega))$ for all
$\omega\in\Omega$. Then $F\in \mathcal M(\Omega; \frak X)$. Moreover
if $S_{F_{1}}^{p}$ and $S_{F_{2}}^{p}$ are nonempty where $1\leq
p<\infty$, then $S_{F}^{p}=cl(S_{F_{1}}^{p}+S_{F_{2}}^{p})$, the
closure in $L^{p}(\Omega; \frak X)$ (see e.g.\cite{Hia}).

Let $\mathbb R_{+}$ be the set of all nonnegative real numbers and
$\mathcal B_{+}:=\mathcal B(\mathbb R_ {+})$. $\mathbb{N}$ denotes
the set of natural numbers.
 An $\frak X$-valued stochastic process $f=\{f_{t}: t\geq0\}$ (or denoted
by $f=\{f(t): t\geq0\}$) is defined as a function $f: \mathbb
R_{+}\times\Omega \longrightarrow\frak X$ with the $\mathcal
F$-measurable section $f_{t}$, for $t\geq0$. We say $f$ is {\em
measurable }if $f$ is $\mathcal B_{+}\otimes\mathcal F$-measurable.
The process $f=\{f_{t}: t\geq0\}$ is called {\em $\mathcal
F_{t}$-adapted} if $f_{t}$ is $\mathcal F_{t}$-measurable for every
$t\geq0$. $f=\{f_{t}: t\geq0\}$ is called {\em predictable} if it is
$\mathcal P$-measurable, where $\mathcal P$ is the $\sigma$-algebra
generated by all left continuous and $\mathcal F_{t}$-adapted
stochastic processes.

In a fashion similar to the $\frak X$-valued stochastic process, a
{\em set-valued stochastic process} $F=\{F_{t}: t\geq 0\}$ is
defined as a set-valued function $F: \mathbb R_{+}\times
\Omega\longrightarrow \bf K(\frak X)$ with $\mathcal F$-measurable
section $F_{t}$ for $t\geq 0$. It is called {\em measurable} if it
is $\mathcal B_{+}\otimes \mathcal F$-measurable, and {\em
${\mathcal F}_{t}$-adapted} if for any fixed $t$, $F_{t}(\cdot)$ is
${\mathcal F}_{t}$-measurable. $F=\{F_{t}: t\geq 0\}$ is called {\em
predictable} if it is $\mathcal P$-measurable.

\begin{definition}\label{def:001}
(see \cite{Hia}) An integrable bounded convex set-valued
${\mathcal{F}}_{t}$-adapted stochastic process $\{F_{t},{\mathcal{F}}_{t}:
t\geq 0 \}$ is called a {\em set-valued ${\mathcal{F}}_{t}$-martingale}
if for any $0\leq s \leq t$ it holds that
$E[F_{t}|{\mathcal{F}}_{s}]=F_{s}$ in the sense of
$S_{E[F_{t}|{\mathcal{F}}_{s}]}^{1}({\mathcal{F}}_{s})=S_
{F_{s}}^{1}({\mathcal{F}}_{s})$.

It is called a {\em set-valued submartingale (supermartingale)} if
for any $0\leq s\leq t$, $E[F_{t}|{\mathcal{F}}_ {s}]\supset F_{s}$
(resp. $E[F_{t}|{\mathcal{F}}_{s}]\subset F_{s} $) in the sense of
$S_{E[F_{t}|{\mathcal{F}}_{s}]}^{1}({\mathcal{F}}_{s}) \supset S_
{F_{s}}^{1}({\mathcal{F}}_{s})$ (resp.
$S_{E[F_{t}|{\mathcal{F}}_{s}]}^{1}({\mathcal{F}}_{s}) \subset S_
{F_{s}}^{1}({\mathcal{F}}_{s})$)

\end{definition}

Note: This is the original definition of set-valued martingale given
by Hiai and Umegaki (1977) in \cite{Hia}. There are some references
which give the definition without the assumptions of convexity or
integrably boundedness (only assume it is integrable and $\mathcal
{F}_{t}$-adapted), see e.g. \cite{Li1}. In this paper, we use the
original definition.

 An $\frak X$-valued martingale $f=\{f_{t}, {\mathcal{F}}_{t}, t
\geq 0\}$ is called an {\em ${\bf L}^{p}$-martingale selection } of
the set-valued stochastic process $F=\{F_{t}, {\mathcal{F}}_{t}, t \geq
0\}$ if it is an $L^{p}$-selection of $F=\{F_{t}, {\mathcal {F}}_{t},
t\geq 0\}$. The family of all $L^{p}$-martingale selections of
$F=\{F_{t}, {\mathcal{F}}_{t}: t\geq 0\}$ is denoted by ${\bf
MS}^{p}(F(\cdot))$.
 Briefly, write $\bf{MS(F)} = {\bf MS}^{1}(F(\cdot))$.

 For interval-valued martingale, here we list a known result, which will be used later.

\begin{theorem}(\cite{Zha4})\label{pro:003}
Let $F=\{F(t), {\mathcal{F}}_{t}: t\geq 0\}$ be an adapted
interval-valued
 stochastic process, and
 $F=\{F(t), \mathcal{F}_{t}: t\geq 0\}\subset L^{1}(\Omega,
\mathcal{F}, P; {\bf {K}}_{c}(\mathbb{R})),$
 then the following two statements are equivalent:

 (1) $F=\{F(t), \mathcal{F}_{t}: t\geq 0\}$ is an interval-valued
 martingale;

 (2) there exist two real-valued martingale selection $\xi=
\{\xi(t), \mathcal{F}_{t}: t\geq
 0\}$ and $\eta=\{\eta(t), \mathcal{F}_{t}: t\geq 0\}$, s.t.
for each
 $t$, $F(t,\omega)=[\xi(t,\omega), \eta(t,\omega)]$ a.s.

\end{theorem}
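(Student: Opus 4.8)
The plan is to prove the two implications separately; the technical core of both is an explicit formula for the Hiai--Umegaki conditional expectation of an interval-valued integrable random variable, which is the one-dimensional case of the support-function formula $s_{E[G\mid\mathcal G]}(x^{*})=E[\,s_{G}(x^{*})\mid\mathcal G\,]$ for set-valued conditional expectation. Concretely I would first establish the auxiliary fact: if $G=[g_{1},g_{2}]\in L^{1}(\Omega,\mathcal F,P;{\bf K}_{c}(\mathbb R))$ and $\mathcal G\subset\mathcal F$ is a sub-$\sigma$-algebra, then the $\mathcal G$-measurable $L^{1}$-selections of $E[G\mid\mathcal G]$ are exactly the $\mathcal G$-measurable $h$ with $E[g_{1}\mid\mathcal G]\le h\le E[g_{2}\mid\mathcal G]$ a.s.; equivalently $E[G\mid\mathcal G]=[\,E[g_{1}\mid\mathcal G],\,E[g_{2}\mid\mathcal G]\,]$ a.s. One inclusion is immediate: if $g_{1}\le f\le g_{2}$ a.s. then $E[g_{1}\mid\mathcal G]\le E[f\mid\mathcal G]\le E[g_{2}\mid\mathcal G]$ a.s. by monotonicity of conditional expectation. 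For the reverse, given such an $h$ I would set $\lambda:=(h-E[g_{1}\mid\mathcal G])/(E[g_{2}\mid\mathcal G]-E[g_{1}\mid\mathcal G])$ on the set where the denominator is positive and $\lambda:=0$ elsewhere; then $\lambda$ is $\mathcal G$-measurable with values in $[0,1]$, the function $f:=\lambda g_{2}+(1-\lambda)g_{1}$ is an $L^{1}$-selection of $G$ (a pointwise convex combination of the endpoints), and since $\lambda$ is $\mathcal G$-measurable and bounded, $E[f\mid\mathcal G]=\lambda E[g_{2}\mid\mathcal G]+(1-\lambda)E[g_{1}\mid\mathcal G]=h$. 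The same parametrization shows the set of conditional expectations of selections is already closed in $L^{1}$, so no extra closure is needed.

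For $(2)\Rightarrow(1)$: assume $F(t,\omega)=[\xi(t,\omega),\eta(t,\omega)]$ with $\xi,\eta$ real-valued $\mathcal F_{t}$-martingales. Then $F$ is convex-valued by construction, $\mathcal F_{t}$-adapted because its endpoints are, and integrably bounded because $\|F(t)\|_{\bf K}=\max\{|\xi(t)|,|\eta(t)|\}\le|\xi(t)|+|\eta(t)|\in L^{1}(\Omega;\mathbb R)$, so $F$ is eligible for Definition \ref{def:001}. Applying the auxiliary fact with $G=F(t)$, $\mathcal G=\mathcal F_{s}$ and then the martingale property of $\xi,\eta$ gives $E[F(t)\mid\mathcal F_{s}]=[\,E[\xi(t)\mid\mathcal F_{s}],\,E[\eta(t)\mid\mathcal F_{s}]\,]=[\xi(s),\eta(s)]=F(s)$, so $F$ is an interval-valued martingale.

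For $(1)\Rightarrow(2)$: put $\xi(t,\omega):=\inf F(t,\omega)$ and $\eta(t,\omega):=\sup F(t,\omega)$; these are finite and satisfy $F(t,\omega)=[\xi(t,\omega),\eta(t,\omega)]$ because each $F(t,\omega)\in{\bf K}_{c}(\mathbb R)$ is a compact interval. They are $\mathcal F_{t}$-measurable, since $\{\eta(t)>a\}=\{\omega:F(t,\omega)\cap(a,+\infty)\neq\emptyset\}\in\mathcal F_{t}$ by the $\mathcal F_{t}$-measurability of the multifunction $F(t)$, and symmetrically for $\xi(t)$; they are integrable since $|\xi(t)|,|\eta(t)|\le\|F(t)\|_{\bf K}\in L^{1}(\Omega;\mathbb R)$. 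From $E[F(t)\mid\mathcal F_{s}]=F(s)$ and the auxiliary fact, $[\,E[\xi(t)\mid\mathcal F_{s}],\,E[\eta(t)\mid\mathcal F_{s}]\,]=[\xi(s),\eta(s)]$; comparing endpoints yields $E[\xi(t)\mid\mathcal F_{s}]=\xi(s)$ and $E[\eta(t)\mid\mathcal F_{s}]=\eta(s)$, so $\xi$ and $\eta$ are real-valued martingale selections of $F$ with $F(t)=[\xi(t),\eta(t)]$ a.s.

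I expect the main obstacle to be the reverse inclusion in the auxiliary fact, i.e. realizing an arbitrary ``wedged'' $\mathcal G$-measurable function as the conditional expectation of a genuine selection of $G$; the convex-combination parametrization above resolves it, and once that lemma is available both directions of the theorem are short. A secondary point that must be handled with some care is the measurability of the endpoint processes $\xi,\eta$, which relies on interpreting ``adapted'' for $F$ as: each $F(t)$ is $\mathcal F_{t}$-measurable as a set-valued map, so that $\{\,F(t)\cap(a,+\infty)\neq\emptyset\,\}\in\mathcal F_{t}$.
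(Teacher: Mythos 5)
The paper does not prove this statement at all: it is imported verbatim from \cite{Zha4} as a known result, so there is no in-text argument to compare yours against. Judged on its own, your proof is correct and complete. The load-bearing step is your auxiliary formula $E\big[[g_{1},g_{2}]\,\big|\,\mathcal G\big]=\big[E[g_{1}\mid\mathcal G],\,E[g_{2}\mid\mathcal G]\big]$, and both inclusions are handled properly: monotonicity of conditional expectation gives one direction, and the $\mathcal G$-measurable convex-combination parametrization $f=\lambda g_{2}+(1-\lambda)g_{1}$ realizes every wedged $h$ as $E[f\mid\mathcal G]$ for a genuine selection $f$ of $G$ (including on the degenerate set where the two conditional endpoints coincide, where $\lambda=0$ still gives $E[f\mid\mathcal G]=h$), so the decomposable $L^{1}$-closure in the Hiai--Umegaki definition adds nothing. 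This is essentially the one-dimensional instance of the support-function identity $s_{E[G\mid\mathcal G]}(x^{*})=E[s_{G}(x^{*})\mid\mathcal G]$ evaluated at $x^{*}=\pm 1$, which is how the cited source and the present paper handle the analogous statements in a general Banach space (compare the role of $F^{x^{*}}$ and Theorems \ref{th:weak integral}--\ref{th:002}); your version is more elementary and self-contained because in $\mathbb R$ you can exhibit the preimage selection explicitly rather than appeal to martingale-selection representation theorems. The two peripheral points you flag --- measurability of the endpoint processes via $\{F(t)\cap(a,+\infty)\neq\emptyset\}\in\mathcal F_{t}$, and compactness of $F(t,\omega)$ from the $L^{1}$-integrable boundedness built into $L^{1}(\Omega;\mathbf K_{c}(\mathbb R))$ --- are handled correctly, so both implications go through.
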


\section{Properties of set-valued integration associated with Poisson processes}
\label{author_sec:3} In this section, in an M-type 2 Banach space, at first we will briefly
review the stochastic integrals with respect to the Poisson random
measure and the compensated Poisson random measure for $\frak
X$-valued and $\bf {K}(\frak X)$-valued stochastic processes, which
are studied in \cite{2012}. Then we study some other
properties of stochastic integrals for $\bf {K} (\frak X)$-valued
stochastic processes, such as the $L^{2}$-integrable boundedness,
set-valued submartingale property etc.

\subsection{Single-valued stochastic integrals w.r.t. Poisson processes}

The following  definitions and notations related to Poisson point processes come from
 \cite{Ike} and \cite{Watanabe}.

Let $\frak X$ be a separable Banach space and $Z$ be another
separable Banach space with $\sigma$-algebra $\mathcal{B}(Z)$. A
{\em point function ${\bf p}$} on $Z$ means a mapping ${\bf{p}}:
{\bf D_{p}}\rightarrow Z$, where the domain ${\bf {D_{p}}}$ is a
countable subset of $[0, T]$. ${\bf {p}}$ defines a counting measure
${N_{\bf p}}(dtdz)$ on $[0, T]\times Z$ (with the product
$\sigma$-algebra $\mathcal{B}([0, T])\otimes\mathcal{B}(Z)$) by
\begin{equation}
{N_{\bf p}}((0, t], U):=\#\{\tau\in {\bf{D_{p}}}: \tau\leq t,
{\bf{p}}(\tau)\in U\}, \ t\in(0,T],\ U\in \mathcal{B}(Z).
\end{equation}
For $ 0\leq s<t\leq T$,
${N_{\bf p}}((s, t], U):=N_{\bf p}((0,t], U)-N_{\bf p}((0,s], U).$
In the following, we also write ${N_{\bf p}}((0, t], U)$ as ${N_{\bf
p}}(t, U)$.

 A {\em point process} (denoted by $\bf {p}:=(\bf p)_{t\geq 0}$) is obtained by randomizing the notion of
point functions. If there is a continuous $\mathcal{F}_{t}$-adapted
increasing process $\hat{N}_{\bf
 p}$ such that for $U\in \mathcal{B}(Z)$ and $t\in[0, T]$,
 $\tilde{N}_{\bf p}(t, U):=N_{\bf p}(t, U)-\hat{N}_{\bf p}(t,
 U)$ is an $\mathcal{F}_{t}$-martingale,
 then the random measure $\{\hat{N}_{\bf p}(t, U)\}$ is called the {\em
 compensator} of the point process ${\bf p}$ (or $\{N_{\bf p}(t,
 U)\}$) and the process $\{\tilde {N}_{\bf p}(t, U)\}$ is called the {\em compensated} point process.

A point process ${\bf p}$ is called the {\em Poisson Point Process}
if $N_{\bf p}(dtdz)$ is a Poisson random measure on $[0, T]\times
Z$. A Poisson point process is stationary if and only if its
intensity measure $\nu_{\bf p}(dtdz)=E[N_{\bf p}(dtdz)]$ is of the
form
$\nu_{\bf p}(dtdz)=dt\nu(dz)$
for some measure $\nu(dz)$ on $(Z, \mathcal{B}(Z))$. $\nu(dz)$ is
called the {\em characteristic measure of  \ ${\bf p}$}.

Let $\nu$ be a $\sigma$- finite measure on $(Z, \mathcal{B}(Z))$,
(i.e. there exists $U_{i}\in \mathcal{B}(Z), i\in \mathbb{N}$,
pairwise disjoint such that $\nu(U_{i})<\infty$ for all $i\in
\mathbb{N}$ and $Z=\cup_{i=1}^{\infty}U_{i}$),
 ${\bf p}=({\bf
p)}_{t\geq 0}$ be
 the $\mathcal {F}_{t}$-adapted stationary Poisson point process on $Z$ with the characteristic
 measure $\nu$ such that the compensator $\hat{N}_{\bf p}(t, U)=E[N_{\bf p}(t,
 U)]=t\nu(U)$ (non-random).

 For convenience, we will omit the subscript $\bf p$ in the above
notations and assume $\nu(Z)$ is finite.

For any $U\in \mathcal{B}(Z)$, both
$\{N(t, U), t\in[0,T]\}$ and $\{\tilde{N}(t, U), t\in[0,T]\}$ are
stochastic processes with finite variation a.s.
For convenience, from now on, we suppose $\nu$ is a finite measure
in the measurable space $(Z, \mathcal{B}(Z))$.

An $\frak X$-valued mapping $f$ defined on $[0, T]\times
Z\times \Omega$ is called $\mathscr{S}$-predictable if the mapping
$(t, z, \omega)\rightarrow f(t, z, \omega)$ is
$\mathscr{S}/\mathcal{B}(\frak X)$-measurable, where $\mathscr{S}$
is the smallest $\sigma$-algebra
with respect to which all mappings $g: [0, T]\times Z\times
\Omega\rightarrow \frak X$ satisfying (i) and (ii) below are
measurable:

(i) for each $t\in[0, T]$, the mapping $(z,\omega)\rightarrow
g(t,z,\omega)$ is $\mathcal{B}(Z)\otimes
\mathcal{F}_{t}$-measurable;

(ii) for each $(z,\omega)\in Z\times \Omega$, the mapping
$t\rightarrow g(t,z,\omega)$ is left continuous.

\begin{remark}
(see e.g. \cite{Watanabe}) $\mathscr{S}=\mathcal{P}\otimes
\mathcal{B}(Z)$, where $\mathcal{P}$ denotes the $\sigma$-field on
$[0, t]\times \Omega$ generated by all left continuous and
$\mathcal{F}_{t}$-adapted processes.
\end{remark}

Set

$\mathscr{L}=\Big\{f:  f \ is\ \mathscr{S}{\rm-}predictable \ and
\ E\Big[\int_{0}^{T}\int_{Z}\|f(t,z,\omega)\|^{2}\nu(dz)dt\Big]<\infty\Big\}$
equipped with the norm
$\|f\|_{\mathscr{L}}:=\Big(E\Big[\int_{0}^{T}\int_{Z}\|f(t,z,\omega)\|^{2}\nu(dz)dt\Big]\Big)^{1/2}.$

In the following, when $f(t,z,\omega)$ (or $F(t,z,\omega)$) to appear as the integrand in an integral, for brevity, it will be denoted by $f_{t}(z)$ (or $F_{t}(z)$ respectively).

In an M-type 2 Banach space (Definition \ref{def:1}), by using the usual method, for any $f\in \mathscr{L}$, the integrals

$$
J_{t}(f)=\int_{0}^{t+}\int_{Z}f_{\tau}(z)N(d\tau dz),\ for \ t\geq 0
$$

and

$$
I_{t}(f)=\int_{0}^{t+}\int_{Z}f_{\tau}(z)\tilde{N}(d\tau dz), \ for \ t\geq 0
$$
are well defined. See for e.g.\cite{2012} and references therein.

\begin{definition}(\cite{Brz})\label{def:1}
A Banach space $(\frak X, \|\cdot\|)$ is called M-type 2 if and only
if there exists a constant $C_{\frak X}>0$ such that for any $\frak
X$-valued martingale $\{\bf {M}_{k}\}$, it holds that
\begin{equation}\label{eq:4.1}
 \sup_{k}E[\|{\bf {M}}_{k}\|^{2}]\leq C_{\frak X}\sum_{k}E[\|{\bf
{M}}_{k}-{\bf {M}}_{k-1}\|^{2}].
\end{equation}
\end{definition}

About integral processes $(J_{t})_{t\in [0,T]}$ and $(I_{t})_{t\in [0,T]}$, the following known results will be used to prove some properties of the set-valued case.
\begin{theorem}  \label{thm:000}
 Let $\frak X$ be of M-type 2 and $(Z,\mathcal{B}(Z))$ a
 separable Banach space with finite measure $\nu$,
 ${\bf p}$ a stationary Poisson process with the characteristic measure
 $\nu$. Taking $f\in \mathscr{L}$, then $(J_{t})_{t\in [0,T]}$ and $(I_{t})_{t\in [0,T]}$ are uniformly square integrable, right continuous $\mathcal{F}_{t}$-adapted processes. $(I_{t})_{t\in [0,T]}$ is a martingale with mean zero and $E[J_{t}(f)]=\int_{0}^{t}\int_{Z}E[f_{s}(z)]ds\nu(dz)$.
 Moreover, there exists a constant $C$ such
that
  \begin{equation}
 E\Big[\sup_{0<s\leq t}\Big\|\int_{0}^{s+}\int_{Z}f_{\tau}(z)\tilde{N}(d\tau dz)\Big\|^{2}\Big]
\leq C\int_{0}^{t}\int_{Z}E[\|f_{\tau}(z)\|^{2}]d\tau\nu(dz),
 \end{equation}
 and
 \begin{equation}
 E\Big[\sup_{0<s\leq t}\Big\|\int_{0}^{s+}\int_{Z}f_{\tau}(z)N(d\tau dz)\Big\|^{2}\Big]
\leq C\int_{0}^{t}\int_{Z}E[\|f_{\tau}(z)\|^{2}]d\tau\nu(dz),
 \end{equation}
 where $C$ depends on the constant $C_{\frak X}$ in Definition \ref{def:1}.
 \end{theorem}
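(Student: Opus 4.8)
The plan is to reduce everything to the single-valued theory in an M-type 2 Banach space together with the defining inequality \eqref{eq:4.1}, handling the $N$-integral and the $\tilde N$-integral in parallel but keeping track of which properties survive. First I would recall the standard construction: for a simple $\mathscr S$-predictable $f$ of the form $f_\tau(z)=\sum_i \mathbf 1_{(t_i,t_{i+1}]}(\tau)\,g_i(z)$ with $g_i$ being $\mathcal B(Z)\otimes\mathcal F_{t_i}$-measurable and square integrable against $\nu$, one defines $J_t(f)$ and $I_t(f)$ pathwise as finite sums against the (a.s.\ finite-variation) measures $N$ and $\tilde N=N-\hat N$. Right-continuity and $\mathcal F_t$-adaptedness are immediate for simple $f$ from the right-continuity of $t\mapsto N(t,U)$, and they pass to the limit because the maximal inequalities (to be proved next) give uniform convergence in $L^2(\Omega;\frak X)$ along any approximating sequence $f_n\to f$ in $\mathscr L$.

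The core analytic step is the two maximal inequalities. For $I_t$ this is exactly where M-type 2 enters: fixing $t$ and a partition, the process $k\mapsto \int_0^{s_k+}\!\int_Z f_\tau(z)\tilde N(d\tau dz)$ is an $\frak X$-valued martingale, so \eqref{eq:4.1} bounds its $L^2$-maximum by $C_{\frak X}\sum_k E\|\Delta_k\|^2$, and one computes $\sum_k E\|\Delta_k\|^2$ for simple integrands via the isometry-type identity for the compensated Poisson measure, $E\|\int\!\int g\,\tilde N\|^2 \le$ (using M-type 2 again on the jump decomposition, or directly the Poisson variance formula) a constant times $\int_0^t\!\int_Z E\|f_\tau(z)\|^2 d\tau\,\nu(dz)$; then Doob/Fatou upgrades this to the supremum over all $0<s\le t$ and a density argument extends it from simple $f$ to all $f\in\mathscr L$. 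For $J_t$ one writes $N=\tilde N+\hat N$ with $\hat N(d\tau dz)=d\tau\,\nu(dz)$ deterministic, so $J_t(f)=I_t(f)+\int_0^t\!\int_Z f_\tau(z)\,d\tau\,\nu(dz)$; the second term is handled by Jensen/Cauchy–Schwarz against the finite measure $d\tau\,\nu(dz)$ on $[0,t]\times Z$, and the triangle inequality combines the two bounds. Uniform square integrability of the two families then follows from the $t=T$ cases of these inequalities since $f\in\mathscr L$.

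The remaining assertions are softer. That $(I_t)_{t\in[0,T]}$ is a mean-zero martingale comes from the simple-function case (each increment $\int_{(s,t]}\!\int_Z g\,\tilde N$ is $\mathcal F_s$-conditionally mean zero because $\tilde N$ is a martingale measure and $g$ is $\mathcal F_s$-measurable) and is preserved under the $L^2$-limit, using that conditional expectation is an $L^2$-contraction; mean zero is the $s=0$ instance. The formula $E[J_t(f)]=\int_0^t\!\int_Z E[f_s(z)]\,ds\,\nu(dz)$ follows by taking expectations in $J_t(f)=I_t(f)+\int_0^t\!\int_Z f_\tau(z)\,d\tau\,\nu(dz)$, since $E[I_t(f)]=0$ and Fubini applies to the (Bochner-)integrable second term. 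I expect the main obstacle to be the maximal inequality for $I_t$: one must be careful that the martingale in \eqref{eq:4.1} is indexed by the partition points rather than by continuous time, so the passage to $\sup_{0<s\le t}$ requires either refining partitions and invoking right-continuity of the integral process, or an $\frak X$-valued Doob-type inequality — and one must verify that the constant $C$ produced this way depends only on $C_{\frak X}$ and not on $f$ or the partition.
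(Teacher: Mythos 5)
The paper does not prove Theorem \ref{thm:000}: it is stated as a known result imported from \cite{2012} (and the references therein), so there is no in-paper argument to compare against. Your sketch is a correct reconstruction of the standard proof along exactly the lines one expects in that reference --- simple-function approximation, the M-type 2 inequality \eqref{eq:4.1} applied twice (once over partition points, once over the jumps within each subinterval) combined with Doob's $L^{2}$ inequality for the real submartingale $\|I_{s}\|$ to pass from $\sup_{k}E$ to $E\sup_{s}$, and the decomposition $N=\tilde N+d\tau\,\nu(dz)$ with Cauchy--Schwarz for the $J$-integral --- and you correctly flag the one genuinely delicate point (the passage from partition-indexed to continuous-time suprema).
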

 \subsection{Set-valued stochastic integrals w.r.t. Poisson processes}
For the convenience to read the paper without aiding references, and in order to prove our main results, in this subsection, at first we review the stochastic integral of
a set-valued stochastic process with respect to the Poisson point
process and list some auxiliary results obtained in \cite{2012}. And then
we shall study its $L^{2}$-integrable boundedness, submartingale propoerty  and some inequalities,
which make it possible to study the set-valued stochastic differential
equation with set-valued jump.

 A set-valued stochastic process $F=\{F(t)\} :[0,T]\times Z\times\Omega\rightarrow {\bf K}(\frak X)$
is called {\em $\mathscr{S}$-predictable} if $F$ is
$\mathscr{S}/\sigma(\mathcal C)$-measurable.

Set
\begin{equation*}
\mathscr{M}=\Big\{F:[0,T]\times Z\time \Omega\rightarrow {\bf K}(\frak X), F \ is\ \mathscr{S}{\rm-}predictable \ and
\ E\Big[\int_{0}^{T}\int_{Z}\|F_{t}(z)\|_{\bf
K}^{2}dt\nu(dz)\Big]<\infty\Big\}
\end{equation*}
Given a set-valued stochastic process $\{F(t)\}_{t\in[0,T]}$, the
$\frak X$-valued stochastic process $\{f(t)\}_{t\in[0,T]}$ is called
 an {\em $\mathscr{S}$-selection} if $f(t,z,\omega)\in F(t,z,\omega)$
for all $(t,z,\omega)$ and $f\in \mathscr{S}$. For any $F\in\mathscr{M}$, the $\mathscr{S}$-selection
exists and satisfies
$$E\Big[\int_{0}^{T}\int_{Z}\|f_{t}(z)\|^{2}dt\nu(dz)\Big]\leq
E\Big[\int_{0}^{T}\int_{Z}\|F_{t}(z)\|_{\bf
K}^{2}dt\nu(dz)\Big]<\infty,$$ which means $f\in\mathscr{L}$. The
family of all $f$ which belongs to $\mathscr{L}$ and satisfies
$f(t,z,\omega)\in F(t,z,\omega) \ for \ a.e. \ (t,z,\omega)$ is
denoted by $S(F)$, that is
$S(F)=\{f\in \mathscr{L}: f(t,z,\omega)\in F(t,z,\omega) \ for \ a.e. \ (t,z,\omega)\}.$
 Set
\begin{equation*}\tilde{\Gamma} _{t}:=\{\int_{0}^{t+}\int_{Z}f_{s}(z)\tilde{N}(dsdz):
(f (t))_{t\in[0, T]}\in S(F)\},
\end{equation*}
\begin{equation*}\Gamma _{t}:=\{\int_{0}^{t}\int_{Z}f_{s}(z)N(dsdz):
(f (t))_{t\in[0, T]}\in S(F)\}.
\end{equation*}
Let $\overline{de}\tilde{\Gamma}_{t}$
($\overline{de}\Gamma _{t}$) denote the decomposable closed hull of
$\tilde{\Gamma}_{t}$ (resp. $\Gamma _{t}$)with respect to $\mathcal
F_{t}$, where the closure is taken in $L^{1}(\Omega, \frak X)$. Then  $\overline{de}\tilde{\Gamma}_{t}$ and $\overline{de}\Gamma _{t}$ can determine two set-valued random variables respectively, denoted by $I_{t}(F)$, $J_{t}(F)(\in \mathcal M (\Omega, \mathcal
F_{t}, P; {\bf K}(\frak X)))$ such that $S_{I_{t}(F)}^{1}({\mathcal
{F}}_{t})= \overline{de} \tilde{\Gamma}_{t}$ and $S_{J_{t}(F)}^{1}({\mathcal
{F}}_{t})= \overline{de} \Gamma_{t}$.

\begin{definition}\label{def:integral}
The set-valued stochastic processes $(J_{t}(F))_{t\in[0, T]}$ and
$(I_{t}(F))_{t\in[0, T]}$ determined as above are called the stochastic
integrals of $\{F_{t}, : t\in[0, T]\}\in
\mathscr{M}$ with respect to the Poisson random measure $N(ds,dz)$
and the compensated random measure $\tilde{N}(dsdz)$ respectively.
For each $t$, we denote
$I_{t}(F)=\int_{0}^{t+}\int_{Z}F_{s}(z) \tilde{N}(dsdz)$,
$J_{t}(F)=\int_{0}^{t+}\int_{Z}F_{s}(z) N(dsdz)$. Similarly,
for $0< s <t$,we also can define the set-valued random variable
$I_{s,t}(F)=\int_{s}^{t}\int_{Z}F_{\tau}(z)\tilde{N}(d\tau
dz)$, $J_{s,t}(F)=\int_{s}^{t}\int_{Z}F_{\tau}(z)N(d\tau dz)$.
\end{definition}

By additive property of set-valued random variable and Definition \ref{def:integral}, it
is easy to get the proposition below:
\begin{proposition}\label{pro:integral sum}
Assume set-valued stochastic processes $\{F_{t}, \mathcal{F}_{t} :
t\in[0, T]\}$ and $ \{G_{t} , \mathcal{F}_{t} : t\in[0, T]\}\in
\mathscr{M}$. Then
$$J_{t}(F+G)=cl\{J_{t}(F)+J_{t}(G)\} \ a.s \ and \ I_{t}(F+G)=cl\{I_{t}(F)+I_{t}(G)\} \ a.s.,$$
where the {\em cl} stands for the closure in $\frak X$.
\end{proposition}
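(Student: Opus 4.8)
The plan is to pull the set identity back to an identity between the corresponding families of $L^{1}$-selections over $\mathcal F_{t}$ and then to invoke the standard fact that a nonempty closed decomposable subset of $L^{1}(\Omega,\mathcal F_{t},P;\frak X)$ determines a set-valued random variable uniquely up to a null set. First note that $F+G\in\mathscr M$ (the estimate $\|(F+G)_{t}(z)\|_{\bf K}\le\|F_{t}(z)\|_{\bf K}+\|G_{t}(z)\|_{\bf K}$ and the measurability of $cl(F_{t}(z)+G_{t}(z))$ are routine), so $J_{t}(F+G)$ is defined. By Definition \ref{def:integral}, $S^{1}_{J_{t}(F)}(\mathcal F_{t})=\overline{de}\Gamma_{t}(F)$ and likewise for $G$ and for $F+G$; moreover, by the additivity-of-selections result recalled in Section \ref{author_sec:2} (see \cite{Hia}), $cl\{J_{t}(F)+J_{t}(G)\}$ is the set-valued random variable whose $L^{1}$-selections are $cl_{L^{1}}(S^{1}_{J_{t}(F)}+S^{1}_{J_{t}(G)})$. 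Hence it suffices to prove
\[
\overline{de}\Gamma_{t}(F+G)=cl_{L^{1}}\!\bigl(\overline{de}\Gamma_{t}(F)+\overline{de}\Gamma_{t}(G)\bigr),
\]
together with the identical statement for $\tilde\Gamma_{t}$. Since both $J_{t}(\cdot)$ and $I_{t}(\cdot)$ are $L^{2}$-continuous on $\mathscr L$ by the inequalities in Theorem \ref{thm:000}, the argument for $N(dsdz)$ and for $\tilde N(dsdz)$ is verbatim the same, so I would carry it out only for $N(dsdz)$.

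The first step would be a purely $L^{1}$ bookkeeping lemma: for arbitrary $D_{1},D_{2}\subset L^{1}(\Omega,\mathcal F_{t},P;\frak X)$ one has $cl_{L^{1}}(\overline{de}D_{1}+\overline{de}D_{2})=\overline{de}(D_{1}+D_{2})$. The inclusion $\supseteq$ holds because a sum of decomposable sets is decomposable and $L^{1}$-closure preserves decomposability, so the left-hand side is a closed decomposable set containing $D_{1}+D_{2}$ and therefore contains its decomposable closed hull. The inclusion $\subseteq$ holds because, for fixed $w$, the translation $u\mapsto u+w$ is a homeomorphism of $L^{1}$ mapping decomposable sets to decomposable sets, so $\overline{de}D_{1}+w=\overline{de}(D_{1}+w)\subset\overline{de}(D_{1}+D_{2})$ for each $w\in D_{2}$; running the same argument in the second coordinate gives $\overline{de}D_{1}+\overline{de}D_{2}\subset\overline{de}(D_{1}+D_{2})$, and the claim follows after taking closures.

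It then remains to identify $\overline{de}\Gamma_{t}(F+G)$ with $\overline{de}(\Gamma_{t}(F)+\Gamma_{t}(G))$, for which, by monotonicity of $\overline{de}(\cdot)$, I only need $\Gamma_{t}(F)+\Gamma_{t}(G)\subset\Gamma_{t}(F+G)$ and $\Gamma_{t}(F+G)\subset cl_{L^{1}}(\Gamma_{t}(F)+\Gamma_{t}(G))$. The first is immediate: if $f\in S(F)$, $g\in S(G)$ then $f+g$ is $\mathscr S$-predictable, lies in $\mathscr L$ (as $\|f+g\|^{2}\le 2\|f\|^{2}+2\|g\|^{2}$), satisfies $(f+g)(t,z,\omega)\in F_{t}(z)+G_{t}(z)\subset(F+G)_{t}(z)$ a.e., hence $f+g\in S(F+G)$, and $J_{t}(f)+J_{t}(g)=J_{t}(f+g)$ by linearity of the $\frak X$-valued integral. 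For the second I would apply the additivity-of-selections theorem on the product space $([0,T]\times Z\times\Omega,\mathscr S,dt\,\nu(dz)\,dP)$ to obtain $S(F+G)=cl_{\mathscr L}(S(F)+S(G))$, so any $h\in S(F+G)$ is an $\mathscr L$-limit $h=\lim_{n}(f_{n}+g_{n})$ with $f_{n}\in S(F)$, $g_{n}\in S(G)$; the $L^{2}$-bound in Theorem \ref{thm:000} makes $h\mapsto J_{t}(h)$ continuous from $\mathscr L$ into $L^{2}(\Omega;\frak X)\hookrightarrow L^{1}(\Omega;\frak X)$, so $J_{t}(h)=\lim_{n}\bigl(J_{t}(f_{n})+J_{t}(g_{n})\bigr)\in cl_{L^{1}}(\Gamma_{t}(F)+\Gamma_{t}(G))$. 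Chaining these identities yields $S^{1}_{J_{t}(F+G)}(\mathcal F_{t})=cl_{L^{1}}(S^{1}_{J_{t}(F)}+S^{1}_{J_{t}(G)})=S^{1}_{cl\{J_{t}(F)+J_{t}(G)\}}(\mathcal F_{t})$, hence $J_{t}(F+G)=cl\{J_{t}(F)+J_{t}(G)\}$ a.s., and the same computation with $\tilde N$ gives $I_{t}(F+G)=cl\{I_{t}(F)+I_{t}(G)\}$ a.s. The main obstacle I anticipate is the careful handling of the decomposable-closed-hull calculus in the bookkeeping lemma together with justifying the Hiai--Umegaki decomposition $S(F+G)=cl_{\mathscr L}(S(F)+S(G))$ on the predictable product space; the rest is routine.
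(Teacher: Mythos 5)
Your proof is correct and takes essentially the route the paper intends: the paper gives no detailed argument, saying only that the proposition follows ``by the additive property of set-valued random variables and Definition~\ref{def:integral}'', and your write-up is exactly a careful elaboration of that --- the Hiai--Umegaki identity $S^{1}_{cl(F_{1}+F_{2})}=cl(S^{1}_{F_{1}}+S^{1}_{F_{2}})$ applied both on $(\Omega,\mathcal F_{t},P)$ and on the $\mathscr S$-measurable product space, combined with the decomposable-closed-hull bookkeeping and the $L^{2}$-continuity of $h\mapsto J_{t}(h)$ from Theorem~\ref{thm:000}. No gaps; the steps you flag as needing justification (closure of decomposability under sums and translations, nonemptiness of $S(F)$ and $S(G)$, Hiai--Umegaki on a general finite measure space) all go through as you indicate.
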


Assume a set-valued stochastic process $\{F_{t} , \mathcal{F}_{t}:
t\in[0, T]\} \in \mathscr{M}$. From \cite{2012}, we know that $\{J_{t}(F)\}$ and
$\{I_{t}(F)\}$ are integrably bounded and right continuous with respect to $t$. Moreover, if $F_{t}$ is convex for each $t$, the process $\{I_{t}(F), t\in[0,T]\}$ is a set-valued submartingale.

Note:
The integral $\{I_{t}(F), t\in[0,T]\}$ is not a set-valued martingale except for special
case (the singletons). The counterexample and rigorous proof are
given below.

Since the compensated Poisson random measure is a signed measure. In order to give a counterexample, we now decompose it into the difference of two measures. For convenience, let's review the Hahn decomposition of a space and the Jordan decomposition of a signed measure (see e.g.\cite{Meg}).

 The signed measure
$\tilde{N}$ is  defined in the product space $([0,t]\times Z;
\mathcal{B}([0,t])\otimes \mathcal{Z})$ with finite variation. By
the Hahn decomposition theorem, for any fixed $0<t\leq T$, there
exists an essential unique $\mathcal{B}([0,t])\otimes
\mathcal{Z}$-measurable Hahn decomposition denoted by $A^{+}$ and
$A^{-}$ such that $A^{+}\cup A^{-}=(0,t]\times Z$, $A^{+}\cap
A^{-}=\emptyset $, and for any $\mathcal{B}([0,t])\otimes
\mathcal{Z}$-measurable set $B\subset A^{+}$, $\tilde{N}(B)\geq 0$,
for any $\mathcal{B}([0,t])\otimes \mathcal{Z}$-measurable set
$B\subset A^{-}$, $\tilde{N}(B)\leq 0$. The corresponding unique
Jordan decomposition of the signed measure $\tilde{N}$ is denoted by $\tilde{N}^{+}$ and
$\tilde{N}^{-}$ such that $\tilde{N}=\tilde{N}^{+}-\tilde{N}^{-}$.
 For any $\mathcal{B}([0,t])\otimes
\mathcal{Z}$-measurable set $B$,
$$\tilde{N}^{+}(B):=\tilde{N}(B\cap A^{+})=\sup_{S\in B\cap \mathcal{B}([0,t])\otimes
\mathcal{Z}}\tilde{N}(S) $$  and
$$\tilde{N}^{-}(B):=-\tilde{N}(B\cap A^{-})=-\inf_{S\in B\cap \mathcal{B}([0,t])\otimes
\mathcal{Z}}\tilde{N}(S).$$
Particularly,
$$\tilde{N}^{+}(A^{-})=0 \ and\  \tilde{N}^{-}(A^{+})=0.$$
Therefore we have
$$\tilde{N}(B)=\tilde{N}^{+}(B)-\tilde{N}^{-}(B)= \ and\  |\tilde{N}|(B)=\tilde{N}^{+}(B)+\tilde{N}^{-}(B).$$
In addition,  the Jordan decomposition is the minimum decomposition,
and
$$\tilde{N}(dsdz)=N(dsdz)-ds\nu
(dz)=\tilde{N^{+}}(dsdz)-\tilde{N}^{-}(dsdz).$$ Then $\tilde{N}^{+}$
and $\tilde{N}^{-}$ are of finite variation since both $N(dsdz)$ and
$ds\nu(dz)$ are of finite variation. Therefore, for any $f\in
\mathscr{L}$, the integrals
$\int_{0}^{t+}\int_{Z}f_{s}(z)\tilde{N}^{+}(dsdz)$ and
$\int_{0}^{t+}\int_{Z}f_{s}(z)\tilde{N}^{-}(dsdz)$ are well
defined as a manner similar to $\int_{0}^{t+}\int_{Z}f_{s}(z){N}(dsdz)$.
Then we have
$$\int_{0}^{t+}\int_{Z}f_{s}(z)\tilde{N}(dsdz)=\int_{0}^{t+}\int_{Z}f_{s}(z)\tilde{N}^{+}(dsdz)-
\int_{0}^{t+}\int_{Z}f_{s}(z)\tilde{N}^{-}(dsdz),$$

$$\int_{0}^{t+}\int_{Z}f_{s}(z)|\tilde{N}|(dsdz)=\int_{0}^{t+}\int_{Z}f_{s}(z)\tilde{N}^{+}(dsdz)+
\int_{0}^{t+}\int_{Z}f_{s}(z)\tilde{N}^{-}(dsdz).$$

Now we give an example to show that the interval-valued stochastic
integral with respect to the compensated Poisson random measure is
not an interval-valued martingale.

\begin{example}
Let $\frak X=\mathbb{R}$. Take a set-valued stochastic process
$$F(t,z,\omega)\equiv [-1,1] \ for
\ all \ (t, z, \omega)\in [0,T]\times Z\times \Omega. $$
 Then $F=\{F(t,z,\omega) , \mathcal{F}_{t}: t\in[0,T]\}\in \mathscr{M}$.
 The
integral $\int_{0}^{t+}\int_{Z}F_{s}(z)\tilde{N}(dsdz)$ is a
closed interval since $F$ is an interval.  For any selection $f\in
\mathscr{L}$, we have $-1\leq f(s,z,\omega)\leq 1$. Then for any
fixed $t$ ($0<t\leq T$),
\begin{equation*}
\begin{split}
&|\int_{0}^{t+}\int_{Z}f_{s}(z)\tilde{N}(dsdz)|\leq
\int_{0}^{t+}\int_{Z}|f_{s}(z)||\tilde{N}|(dsdz)|\\
&\leq \int_{0}^{t+}\int_{Z}|\tilde{N}|(dsdz)|=|\tilde{N}|((0,t], Z).
\end{split}
\end{equation*}
The extreme point can be attained. In fact, let  $A^{+}$ and $A^{-}$ be the Hahn decomposition
of $(0,t]\times Z$. Taking
$$h(s,z,\omega)=\chi_{A^{+}}-\chi_{A^{-}},$$
which  is non-random then $\mathcal{B}([0,t])\otimes
\mathcal{B}(Z)\otimes \mathcal {F}_{0}$-measurable. Furthermore,
$h=\{h_{t}, \mathcal{F}_{t}: t\in[0,T]\}\in \mathscr{L}$. Then
\begin{equation}\label{eq:014}
\begin{split}
&\sup_{\ \ all \ selections \ f\in
\mathscr{L}}\int_{0}^{t+}\int_{Z}f_{s}(z)\tilde{N}(dsdz)=\int_{0}^{t+}\int_{Z}h_{s}(z)\tilde{N}(dsdz)\\
&=\int_{A^{+}}\tilde{N}(dsdz)-\int_{A^{-}}\tilde{N}(dsdz)\\
&=\tilde{N}(A^{+})-\tilde{N}(A^{-})=\tilde{N}^{+}(A^{+})+\tilde{N}^{-}(A^{-})=|\tilde{N}|((0,t],
Z)
\end{split}
\end{equation}
Similarly, taking $$h(s,z,\omega)=-\chi_{A^{+}}+\chi_{A^{-}},$$ we
obtain
\begin{equation}\label{eq:015}
\inf_{\ \ all \ selections \ f\in
\mathscr{L}}\int_{0}^{t+}\int_{Z}f_{s}(z)\tilde{N}(dsdz)=-|\tilde{N}|((0,t],
Z).
\end{equation}
By the convexity and closedness of
$\int_{0}^{t+}\int_{Z}F_{s}(z)\tilde{N}(dsdz)$, together with \eqref{eq:014}
and \eqref{eq:015}, we obtain
$$\int_{0}^{t+}\int_{Z}F_{s}(z)\tilde{N}(dsdz)=[-|\tilde{N}|((0,t],
Z), |\tilde{N}|((0,t], Z)].$$ It is obvious that the left end point and the right
end point are
$\mathcal{F}_{t}$-supermartingale and submartingale respectively. But not $\mathcal{F}_{t}$-martingale except for
$|\tilde{N}|((0,t],Z)\equiv 0$, a contradiction. Therefore, by
 Definition \ref{def:001} and
Theorem \ref{pro:003}, the integral process
$\{\int_{0}^{t+}\int_{Z}F_{s}(z)\tilde{N}(dsdz),
\mathcal{F}_{t}: t\in(0,T]\}$ is an interval-valued
submartingale, but not an interval-valued martingale.
\end{example}

An interval $I$ is called {\em proper} if it has infinitely many
elements. A convex set $A$ is called {\em non-degenerate} if it has
infinitely many elements. A set $A$ is called a {\em singleton} if
it has only one element. In the following, we will show that for any
 interval-valued integrable stochastic process
$F=\{F(t,z,\omega)\}=\{[f(t,z,\omega), g(t,z,\omega)] ,
\mathcal{F}_{t}: t\in[0,T]\}$, the integral process $\{I_{t}(F) ,
\mathcal{F}_{t}:t\in (0,T]\}$ is not an
interval-valued martingale unless the interval process
$F$ degenerates into single-valued one.

\begin{theorem}\label{thm:interval}
Assume a proper interval-valued stochastic process $\{F_{t}=[f_{t},
g_{t}] , \mathcal{F}_{t}: t\in [0,T]\}$ is integrable with respect
to $N(dsdz)$ and $\tilde{N}(dsdz)$. Then the integral $\{I_{t}(F) ,
\mathcal{F}_{t}:t\in (0,T]\}$ is not an interval-valued martingale.
\end{theorem}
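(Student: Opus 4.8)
The plan is to argue by contradiction: assume $\{I_t(F),\mathcal{F}_t:t\in(0,T]\}$ is an interval-valued martingale and show that this forces $F$ to be single-valued for almost every $(t,z,\omega)$. Because $F\in\mathscr{M}$ is interval-valued, each $I_t(F)$ is a bounded closed interval a.s.\ (it is integrably bounded and interval-valued by the facts recalled from \cite{2012}), say $I_t(F)=[\xi_t,\eta_t]$ with $\mathcal{F}_t$-measurable $\xi_t\le\eta_t$, and $\{I_t(F)\}$ is right-continuous in $t$. By Theorem~\ref{pro:003}, the martingale assumption forces $\xi=\{\xi_t,\mathcal{F}_t\}$ and $\eta=\{\eta_t,\mathcal{F}_t\}$ to be genuine real-valued martingales.

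The next step is to insert into this the two extremal single-valued selections that are always at hand, namely the endpoint processes $f=\{f_t\}$ and $g=\{g_t\}$ of $F$ itself; being the endpoints of a predictable square-integrable interval-valued process, they lie in $S(F)$. Put $Y^{f}_t:=\int_0^{t+}\int_Z f_s(z)\tilde{N}(dsdz)$ and $Y^{g}_t:=\int_0^{t+}\int_Z g_s(z)\tilde{N}(dsdz)$. By Theorem~\ref{thm:000} these are mean-zero martingales; by the construction of the set-valued integral they are also a.s.\ elements of $I_t(F)=[\xi_t,\eta_t]$ (each belongs to $\tilde{\Gamma}_t\subset S^{1}_{I_t(F)}(\mathcal{F}_t)$), so $\xi_t\le Y^{f}_t$ and $\xi_t\le Y^{g}_t$ a.s.\ for all $t$. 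Hence $Y^{f}-\xi$ and $Y^{g}-\xi$ are nonnegative martingales which tend to $0$ as $t\downarrow 0$ (both endpoints of $I_t(F)$ go to $0$ since $I_0(F)=\{0\}$ and $\{I_t(F)\}$ is right-continuous). A nonnegative martingale that is null at the origin is identically null, so $Y^{f}_t=\xi_t=Y^{g}_t$ a.s.\ for every $t\in(0,T]$; writing $\psi_s(z):=g_s(z)-f_s(z)\ge 0$, this is precisely
\begin{equation*}
\int_0^{t+}\int_Z\psi_s(z)\,\tilde{N}(dsdz)=Y^{g}_t-Y^{f}_t=0\qquad\text{a.s., for every }t\in(0,T].
\end{equation*}

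The last step uses the pure-jump structure of $N$. Since $\tilde{N}(dsdz)=N(dsdz)-ds\,\nu(dz)$, the above identity reads $\int_0^{t+}\int_Z\psi_s(z)N(dsdz)=\int_0^t\int_Z\psi_s(z)\,ds\,\nu(dz)$ a.s.\ for all $t$. The right-hand side is continuous in $t$, whereas the left-hand side is a piecewise-constant nondecreasing process with a jump of size $\psi_\tau({\bf p}(\tau))\ge 0$ at each $\tau\in{\bf D_{p}}$; equality forces every jump to vanish, so $\int_0^{t+}\int_Z\psi_s(z)N(dsdz)\equiv 0$, hence $\int_0^t\int_Z\psi_s(z)\,ds\,\nu(dz)\equiv 0$, and therefore $\psi=0$ for $(ds\,\nu(dz))$-almost every $(s,z)$, almost surely. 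Taking expectations gives $\int_0^T\int_Z E[g_s(z)-f_s(z)]\,ds\,\nu(dz)=0$, i.e.\ $F_t(z,\omega)$ is a singleton for almost every $(t,z,\omega)$ — which contradicts the hypothesis that $F$ is a proper interval-valued process. Thus $\{I_t(F)\}$ is not an interval-valued martingale.

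I expect the one point needing care to be the collapse step, i.e.\ checking that the nonnegative martingales $Y^{f}-\xi$ and $Y^{g}-\xi$ are genuinely null at $0^{+}$; this is where the right-continuity of $\{I_t(F)\}$ from \cite{2012} and the uniform integrability supplied by Theorem~\ref{thm:000} enter, and it is also the reason the statement is formulated on $(0,T]$. A more computational alternative, in the spirit of the Example above, would instead identify the endpoints of $I_t(F)$ through the Hahn decomposition $A^{\pm}$ of $(0,t]\times Z$ and verify directly that $\eta_t$ is a strict submartingale with $E[\eta_t]=\int_0^t\int_Z E[\psi_s(z)]\,ds\,\nu(dz)>0$; in that route the real difficulty moves to the admissibility in $S(F)$ of the candidate extremal selection $g\chi_{A^{+}}+f\chi_{A^{-}}$, whose support $A^{+}$ is the random graph of ${\bf p}$ and hence not a predictable set. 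The first route sidesteps this by working only with the two bona fide predictable selections $f$ and $g$.
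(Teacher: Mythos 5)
Your argument is correct, but it takes a genuinely different route from the paper's. The paper proceeds constructively in three reduction steps (symmetric interval $[-f,f]$, then $[0,f]$ via $2F=[-f,f]+\{f\}$, then general $[f,g]$ via $F=\{f\}+[0,g-f]$): using the Hahn decomposition $A^{\pm}$ of $(0,t]\times Z$ it identifies the integral of the symmetric interval explicitly as $\bigl[-\int_0^{t+}\int_Z f\,|\tilde N|,\ \int_0^{t+}\int_Z f\,|\tilde N|\bigr]$ and then checks directly that the right endpoint is a submartingale whose conditional increment does not always vanish, so it is not a martingale. You instead argue by contradiction: Theorem \ref{pro:003} converts the assumed martingale property into the statement that both endpoints $\xi,\eta$ of $I_t(F)$ are real-valued martingales; comparing $\xi$ with the two canonical mean-zero martingales $Y^{f},Y^{g}$ coming from the endpoint selections $f,g\in S(F)$, and using that a nonnegative martingale vanishing at $0^{+}$ is identically zero, you force $\int_0^{t+}\int_Z(g-f)\,\tilde N=0$, after which the jump-versus-continuity dichotomy kills $g-f$. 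What the paper's route buys is the explicit description of $I_t(F)$ (and the submartingale property as a by-product); what your route buys is that it only ever uses the two bona fide predictable selections $f$ and $g$, so it sidesteps the admissibility of the extremal selections $\pm(\chi_{A^{+}}f-\chi_{A^{-}}f)$ --- a point you rightly flag, since $A^{+}$ is essentially the random set of atoms of $N$ and hence $\omega$-dependent rather than non-random as the paper asserts. The one step you must write out is the collapse $E[\xi_t]=0$: it follows from $E[\xi_t]=E[\xi_s]$ for $0<s<t$, from $\xi_s\to 0$ a.s.\ as $s\downarrow 0$ (right-continuity of $I_{\cdot}(F)$ at $0$ with $I_0(F)=\{0\}$), and from uniform integrability of $\{\xi_s\}$, which is supplied by the domination $|\xi_s|\le\int_0^{T+}\int_Z\|F\|_{\bf K}\,N(dsdz)+\int_0^{T}\int_Z\|F\|_{\bf K}\,ds\,\nu(dz)\in L^{1}$; with that in place the proof is complete. (Since $g-f$ is real-valued, your final step could also be replaced by the scalar isometry for compensated Poisson integrals, which gives $\int_0^{T}\int_Z E[(g-f)^2]\,ds\,\nu(dz)=0$ directly.)
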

\begin{proof}
Let $\frak X=\mathbb{R}$.

 {\em Step 1}:  At first we consider the
symmetric proper interval. Assume $f=\{f_{t} , \mathcal{F}_{t}: t\in
[0,T]\}\in \mathscr{L}$ and for each $t$, $f_{t}>0$ for a.e.
$(z,\omega)$. Then the interval stochastic process $\{F_{t}=[-f_{t},
f_{t}] , \mathcal{F}_{t}: t\in[0,T]\}\in \mathscr{M}$. Taking any
selection $h\in \mathscr{L}$, we have
\begin{equation*}
\begin{split}
&|\int_{0}^{t+}\int_{Z}h_{s}(z)\tilde{N}(dsdz)|\leq
\int_{0}^{t+}\int_{Z}|h_{s}(z)||\tilde{N}|(dsdz)\\
&\leq \int_{0}^{t+}\int_{Z}f_{s}(z)|\tilde{N}|(dsdz).
\end{split}
\end{equation*}
Similarly, by taking
$$h^{1}(s,z,\omega)=\chi_{A^{+}}f(s,z,\omega)-\chi_{A^{-}}f(s,z,\omega),$$
and respectively
$$h^{2}(s,z,\omega)=-\chi_{A^{+}}f(s,z,\omega)+\chi_{A^{-}}f(s,z,\omega),$$
the extreme points
$\int_{0}^{t+}\int_{Z}f_{s}(z)|\tilde{N}|(dsdz)$ and
$-\int_{0}^{t+}\int_{Z}f_{s}(z)|\tilde{N}|(dsdz)$ can be
attained respectively. Therefore, by the closedness and convexity of the
integral, we obtain
$$\int_{0}^{t+}\int_{Z}F_{s}\tilde{N}(dsdz)=[-\int_{0}^{t+}\int_{Z}f_{s}(z)|\tilde{N}|(dsdz),\int_{0}^{t+}\int_{Z}f_{s}(z)|\tilde{N}|(dsdz)],$$
which implies for each $t$, the integral is a proper interval for
a.e. $(z,\omega)$. The $\mathbb{R}$-valued stochastic process
$\{\int_{0}^{t+}\int_{Z}f_{s}(z)|\tilde{N}|(dsdz),
\mathcal{F}_{t}: t\in(0,T]\}$ is a submartingale but not a
martingale. Indeed, for any $0<s<t\leq T$,
\begin{equation*}
\begin{split}
&E\Big[
\int_{0}^{t+}\int_{Z}f_{\tau}(z)|\tilde{N}|(d\tau dz)|\mathcal{F}_{s}\Big]\\
&=E\Big[
\int_{0}^{s+}\int_{Z}f_{\tau}(z)|\tilde{N}|(d\tau dz)|\mathcal{F}_{s}\Big]+E\Big[
\int_{s}^{t+}\int_{Z}f_{\tau}(z)|\tilde{N}|(d\tau dz)|\mathcal{F}_{s}\Big]\\
&=\int_{0}^{s+}\int_{Z}f_{\tau}(z)|\tilde{N}|(d\tau dz)+E\Big[
\int_{s}^{t+}\int_{Z}f_{\tau}(z)|\tilde{N}|(d\tau dz)|\mathcal{F}_{s}\Big]\\
&\geq\int_{0}^{s+}\int_{Z}f_{\tau}(z)|\tilde{N}|(d\tau dz)
\end{split}
\end{equation*}
but the equality does not always hold for all $0<s<t$ since $$E\Big[
\int_{s}^{t+}\int_{Z}f_{\tau}(z)|\tilde{N}|(d\tau dz)|\mathcal{F}_{s}\Big]=0$$
does not always hold for all $s$.

{\em Step 2}. Let $0<f=\{f_{t}, \mathcal{F}_{t}: t\in[0,T]\}\in
\mathscr{L}$. Setting $F_{t}=[0,f_{t}]$, $G^{1}_{t}=[-f_{t}, f_{t}]
$and $G^{2}_{t}=\{f_{t}\}$ for all $t\in [0,T]$. Then
$2F_{t}=G^{1}_{t}+G^{2}_{t}$.  $F=\{F_{t}, \mathcal{F}_{t}:
t\in[0,T]\}$, $G^{1}=\{G^{1}_{t}, \mathcal{F}_{t}: t\in[0,T]\}$ and
$G^{2}=\{G^{2}_{t}, \mathcal{F}_{t}: t\in[0,T]\}$ belong to
$\mathscr{M}$. By Proposition \ref{pro:integral sum}, for every
$t\in(0,T]$
$$I_{t}(2F)=2I_{t}(F)=\{I_{t}(G^{1})+I_{t}(G^{2})\} \ a.s.$$
Note: here we need not to take closure since bounded closed set is
compact in $\mathbb{R}$, then the set of sum  is closed.

The integral process $\{I_{t}({G^{2}}), \mathcal{F}_{t}:
t\in[0,T]\}$ is an $\mathbb{R}$-valued martingale. And the process
$\{I_{t}({G^{1}}), \mathcal{F}_{t}: t\in[0,T]\}$ is not an
interval-valued martingale but an interval-valued submartingale. Then
the sum $\{I_{t}(2F),  \mathcal{F}_{t}: t\in[0,T]\}$ is an
interval-valued submartingale but not an interval-valued martingale, so does
$\{I_{t}(F),  \mathcal{F}_{t}: t\in[0,T]\}$.

{\em Step 3.} Assume $f=\{f_{t}, \mathcal{F}_{t}: t\in[0,T]\}$,
$g=\{g_{t}, \mathcal{F}_{t}: t\in[0,T]\}\in \mathscr{L}$ and
$f(t,z,\omega)<g(t,z,\omega)$. Setting $F_{t}=[f_{t}, g_{t}]$ for
all $t$, then $F=\{F_{t},  \mathcal{F}_{t}: t\in[0,T]\}\in
\mathscr{M}$ and $F_{t}=\{f_{t}\}+[0,g_{t}-f_{t}]$. In a similar way
as the proof of Step 2, we obtain $\{I_{t}(F),  \mathcal{F}_{t}:
t\in[0,T]\}$ is an interval-valued submartingale but not an
interval-valued martingale.

From the above proof, we obtain that the integral process
$\{I_{t}(F), \mathcal{F}_{t}: t\in[0,T]\}$ is a martingale if and
only if the integrand degenerates into a real valued process.
\end{proof}

In order to prove the result being also true for M-type 2 Banach space
$\frak X$, we aid the bounded linear functional $x^{*}$, which is defined on $\frak X$ and takes values in $\mathbb R$. Let
$\frak X^{*}$ be the family of all bounded linear functionals, i.e. the dual space of $\frak X$, $F=
\{F_{t}, {\mathcal{F}}_{t}: t\in[0, T]\}$ be a convex set-valued
stochastic process. Taking $x^{*}\in\frak X^{*}$, for any $t\in [0,
T]$, define
\begin{equation}\label{eq:016}
F_{t}^{x^{*}}(\omega):=cl\{<x^{*}, a>: a\in F_{t}(\omega)\} \ for \
\omega\in\Omega,
\end{equation}
then $F_{t}^{x^{*}}$ is an interval-valued $\mathcal {F}_{t}$-
measurable random variable (Note: for some $t$, the interval
$F_{t}^{x^{*}}$ may be a singleton for a.e. $(z,\omega)$. For
instance, the case $x^{*}$=0 ). Indeed, it is convex since the
convexity of $F_{t}(\omega)$ and the linearity of $x^{*}$. Take any
open interval $(c, d)\subset \mathbb{R}$,
\begin{eqnarray*}
& &\{\omega: F_{t}^{x^{*}}(\omega)\cap(c, d)\neq \emptyset
\}\\
&=& \Omega \setminus(\{\omega: \sup_{a\in F_{t}(\omega)} <x^{*},
a>\leq c\}\cup \{\omega: \inf_{a\in F_{t}(\omega)}<x^{*}, a>\geq
d\}) \in \mathcal{F}_{t},
\end{eqnarray*}
i.e. $F_{t}^{x^{*}}$ is $\mathcal{F}_{t}$-measurable. Further,
\begin{equation}\label{eq:017}
S_{F_{t}^{x^{*}}}^{1}(\mathcal{F}_{t})=cl\{<x^{*}, f_{t}>: f_{t} \in
S_{F_{t}}^{1}(\mathcal{F}_{t})\}.
\end{equation}
Therefore, $\{F_{t}^{x^{*}}: t\in [0, T]\}$ is an interval-valued
stochastic process. Moreover, if $F=\{F_{t}, {\mathcal{F}}_ {t}:
t\in[0, T]\}$ is convex and belongs to $\mathscr{M}$, then
$F^{x^{*}}=\{F^{x^{*}}_{t}, {\mathcal{F}}_{t}: t\in[0, T]\}$ is an
integrable interval and
$$
S(F^{x^{*}})=cl \{<x^{*}, f>: f=(f_{s})_{s\in [0, T]}\in S(F)\},
$$
where the closure is taken in product space $L^{2}(([0, T] \times
Z\times \Omega), \mathcal {B}([0, T])\otimes \mathcal{B}(Z)\otimes
\mathcal {F}, \lambda\times \nu\times P; \frak X)$, $\lambda$ the
Lebesgue measure in $([0,T];\mathcal{B}([0,T])).$

In a manner similar to the proof of Theorem 4.5 in \cite{Zha4}, we
have the following theorems:
\begin{theorem}\label{th:weak integral}
Assume convex set-valued stochastic process $\{F_{t}, {\mathcal{F}}_
{t}: t\in[0, T]\}\in {\mathscr {M}}$. Then for any $x^{*}\in \frak
X^{*}$, $\{I^{x^{*}}_{t}(F), {\mathcal{F}}_{t}: t \in[0, T]\}$ and
$\{J^{x^{*}}_{t}(F), {\mathcal{F}}_{t}: t \in[0, T]\}$ are
interval-valued $\mathcal{F}_{t}$-adapted processes. For any
$t\in[0, T]$,
\begin{equation*}
\begin{split}
&I_{t}^{x^{*}}(F)(\omega)=I_{t}(F^{x^{*}})(\omega) a.s.\\
&J_{t}^{x^{*}}(F)(\omega)=J_{t}(F^{x^{*}})(\omega) a.s.
\end{split}
\end{equation*}
where $I_{t}^{x^{*}}(F)(\omega):=(I_{t}(F))^{x^{*}} (\omega)$ and
$J_{t}^{x^{*}}(F)(\omega):=(J_{t}(F))^{x^{*}} (\omega)$.
\end{theorem}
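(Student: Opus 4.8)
The plan is to reduce the assertion to the selection-set description of the set-valued integral and then exploit the linearity and continuity of $x^{*}$. Since two $\mathbf{K}(\frak X)$-valued $\mathcal F_{t}$-random variables agree a.s. iff their sets of $L^{1}(\mathcal F_{t})$-selections agree, it suffices to prove $S^{1}_{I^{x^{*}}_{t}(F)}(\mathcal F_{t})=S^{1}_{I_{t}(F^{x^{*}})}(\mathcal F_{t})$ and likewise with $J$. Here, by Definition \ref{def:integral}, $S^{1}_{I_{t}(F^{x^{*}})}(\mathcal F_{t})=\overline{de}\,\tilde\Gamma^{x^{*}}_{t}$ with $\tilde\Gamma^{x^{*}}_{t}=\{I_{t}(g):g\in S(F^{x^{*}})\}$ (note $F^{x^{*}}\in\mathscr M$ because $\|F^{x^{*}}_{t}(z)\|_{\mathbf K}\le\|x^{*}\|\,\|F_{t}(z)\|_{\mathbf K}$), while by \eqref{eq:017}, $S^{1}_{I^{x^{*}}_{t}(F)}(\mathcal F_{t})=S^{1}_{(I_{t}(F))^{x^{*}}}(\mathcal F_{t})=cl\{\langle x^{*},\xi\rangle:\xi\in S^{1}_{I_{t}(F)}(\mathcal F_{t})\}$, closure in $L^{1}(\Omega;\mathbb R)$. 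First I would record the routine facts that $I_{t}(F)$ and $J_{t}(F)$ are convex-valued (because $F_{s}$ convex makes $S(F)$ and hence $\tilde\Gamma_{t}$ convex, and the decomposable closed hull of a convex subset of $L^{1}$ is again convex), integrably bounded, and $\mathcal F_{t}$-measurable; then, exactly as in the paragraph preceding the theorem that shows $F^{x^{*}}_{t}$ is an interval-valued $\mathcal F_{t}$-measurable random variable, $I^{x^{*}}_{t}(F)=(I_{t}(F))^{x^{*}}$ and $J^{x^{*}}_{t}(F)=(J_{t}(F))^{x^{*}}$ are interval-valued $\mathcal F_{t}$-adapted processes.

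The analytic core is a commutation lemma: for every $f\in\mathscr L$, $\langle x^{*},I_{t}(f)\rangle=I_{t}(\langle x^{*},f\rangle)$ and $\langle x^{*},J_{t}(f)\rangle=J_{t}(\langle x^{*},f\rangle)$ a.s. I would prove this by unwinding the construction of the integral: $\langle x^{*},f\rangle$ is $\mathscr S$-predictable, real-valued, and $\|\langle x^{*},f\rangle\|_{\mathscr L}\le\|x^{*}\|\,\|f\|_{\mathscr L}$, so $\langle x^{*},f\rangle\in\mathscr L$; for simple integrands the identity is immediate from the linearity of $x^{*}$; and since $x^{*}$ is continuous while both $f\mapsto I_{t}(f)$ and $f\mapsto I_{t}(\langle x^{*},f\rangle)$ are bounded linear maps from $\mathscr L$ into $L^{2}(\Omega)$ by Theorem \ref{thm:000}, the identity passes to the $L^{2}$-limit defining the integral. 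The same argument covers $J_{t}$ and (if needed) the measures $\tilde N^{\pm}$.

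With these in hand I would establish the two inclusions. For ``$(I_{t}(F))^{x^{*}}\subseteq I_{t}(F^{x^{*}})$ a.s.'': if $f\in S(F)$ then $\langle x^{*},f\rangle\in S(F^{x^{*}})$, so by the lemma $\langle x^{*},I_{t}(f)\rangle=I_{t}(\langle x^{*},f\rangle)\in\tilde\Gamma^{x^{*}}_{t}\subseteq S^{1}_{I_{t}(F^{x^{*}})}(\mathcal F_{t})$; for a general $\xi\in\overline{de}\,\tilde\Gamma_{t}$, write $\xi$ as an $L^{1}$-limit of $\mathcal F_{t}$-decomposable sums $\sum_{i}\mathbf 1_{A_{i}}I_{t}(f_{i})$ with $f_{i}\in S(F)$, apply the continuous linear map $\langle x^{*},\cdot\rangle$, and use decomposability and closedness of $S^{1}_{I_{t}(F^{x^{*}})}(\mathcal F_{t})$ to get $\langle x^{*},\xi\rangle\in S^{1}_{I_{t}(F^{x^{*}})}(\mathcal F_{t})$; taking the $L^{1}$-closure yields the inclusion. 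For the reverse ``$I_{t}(F^{x^{*}})\subseteq(I_{t}(F))^{x^{*}}$ a.s.'' I would invoke the identity $S(F^{x^{*}})=cl\{\langle x^{*},f\rangle:f\in S(F)\}$ (closure in the $L^{2}$ product space) established just above the theorem: given $g\in S(F^{x^{*}})$, choose $f^{n}\in S(F)$ with $\langle x^{*},f^{n}\rangle\to g$ there; then Theorem \ref{thm:000} gives $I_{t}(\langle x^{*},f^{n}\rangle)\to I_{t}(g)$ in $L^{2}(\Omega)$, hence in $L^{1}(\Omega)$, and $I_{t}(\langle x^{*},f^{n}\rangle)=\langle x^{*},I_{t}(f^{n})\rangle$ with $I_{t}(f^{n})\in S^{1}_{I_{t}(F)}(\mathcal F_{t})$, so $I_{t}(g)\in cl\{\langle x^{*},\xi\rangle:\xi\in S^{1}_{I_{t}(F)}(\mathcal F_{t})\}=S^{1}_{(I_{t}(F))^{x^{*}}}(\mathcal F_{t})$; since this last set is decomposable and closed, it contains $\overline{de}\,\tilde\Gamma^{x^{*}}_{t}$, which is the claim. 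The identity for $J$ is obtained verbatim with $\tilde N$ replaced by $N$. I expect the commutation lemma of the second paragraph to be the main obstacle, since it requires working with the limiting definition of the stochastic integral; once it is available, the remainder is a careful but standard manipulation of decomposable hulls, for which the $L^{2}$-density identity $S(F^{x^{*}})=cl\{\langle x^{*},f\rangle:f\in S(F)\}$ is the decisive shortcut on the harder inclusion.
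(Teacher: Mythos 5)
Your argument is correct and is essentially the intended one: the paper omits the proof, deferring to the analogous Theorem 4.5 of \cite{Zha4}, and that argument likewise reduces the identity to equality of $L^{1}(\mathcal F_{t})$-selection sets via the commutation of $x^{*}$ with the single-valued integrals and the behaviour of decomposable closed hulls under the continuous linear map $\langle x^{*},\cdot\rangle$. Your use of the density identity $S(F^{x^{*}})=cl\{\langle x^{*},f\rangle:f\in S(F)\}$ together with the $L^{2}$-continuity estimate of Theorem \ref{thm:000} for the reverse inclusion matches the paper's setup, so the proposal fills in the omitted details faithfully.
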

\begin{theorem}\label{th:002}
Assume convex set-valued stochastic process $\{F_{t}, {\mathcal{F}}_
{t}: t\in[0, T]\}\in \mathscr{M}$, then for any $x^{*}\in \frak
X^{*}$ and $s<t\in[0, T]$,
\begin{equation*}
\begin{split}
&E[I_{t}^{x^{*}}(F)|\mathcal {F}_{s}](\omega)=E^{x^{*}}[I_{t}(F)
|\mathcal {F}_{s}](\omega)\ a.s.\\
&E[J_{t}^{x^{*}}(F)|\mathcal {F}_{s}](\omega)=E^{x^{*}}[J_{t}(F)
|\mathcal {F}_{s}](\omega)\ a.s.
\end{split}
\end{equation*}
where $E^{x^{*}}[I_{t}(F) |\mathcal {F}_{s}]$ is the $\bf {K}(\frak
X)$-valued random variable determined by
\begin{equation*}
\begin{split}
&S_{E^{x^{*}}[I_{t}(F)|\mathcal{F}_{s}]}^{1}(\mathcal{F}_{s})\\
&=cl\{<x^ {*}, g>: g\in
S_{E[I_{t}(F)|\mathcal{F}_{s}]}^{1}(\mathcal{F}_{s})\}
 =cl\{<x^{*},E[g_{t}|\mathcal{F}_{s}]>: g_{t}\in S_{I_{t}(F)}^{1} (\mathcal{F}_{t})\}
 \end{split}
 \end{equation*}
 and $E^{x^{*}}[J_{t}(F) |\mathcal {F}_{s}]$ is the $\bf {K}(\frak
X)$-valued random variable determined by
\begin{equation*}
\begin{split}
&S_{E^{x^{*}}[J_{t}(F)|\mathcal{F}_{s}]}^{1}(\mathcal{F}_{s})\\
&=cl\{<x^ {*}, g>: g\in
S_{E[J_{t}(F)|\mathcal{F}_{s}]}^{1}(\mathcal{F}_{s})\}
 =cl\{<x^{*},E[g_{t}|\mathcal{F}_{s}]>: g_{t}\in S_{J_{t}(F)}^{1}
 (\mathcal{F}_{t})\}.
 \end{split}
 \end{equation*}
\end{theorem}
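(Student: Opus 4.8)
The plan is to reduce the claimed identity to an equality of $\mathcal F_s$-measurable $L^{1}$-selection sets and then to commute the bounded linear functional $x^{*}$ with the conditional expectation. I carry out the argument for $I$; the case of $J$ is identical, replacing $\tilde N(dsdz)$ by $N(dsdz)$ and $I$ by $J$ throughout. By the results of \cite{2012} recalled above, $I_t(F)$ is a convex, integrably bounded, $\mathcal F_t$-measurable set-valued random variable, and since
\begin{equation*}
\|I_t^{x^{*}}(F)(\omega)\|_{\bf K}=\sup_{a\in I_t(F)(\omega)}|\langle x^{*},a\rangle|\le\|x^{*}\|\,\|I_t(F)(\omega)\|_{\bf K},
\end{equation*}
the interval-valued random variable $I_t^{x^{*}}(F)=(I_t(F))^{x^{*}}$ is convex and integrably bounded as well; hence both set-valued conditional expectations in the statement are well defined in the sense of Hiai--Umegaki \cite{Hia}, and it suffices to prove that
\begin{equation*}
S^{1}_{E[I_t^{x^{*}}(F)|\mathcal F_s]}(\mathcal F_s)=S^{1}_{E^{x^{*}}[I_t(F)|\mathcal F_s]}(\mathcal F_s).
\end{equation*}
By the Hiai--Umegaki representation the left-hand side is the $L^{1}$-closure of $\{E[h|\mathcal F_s]:h\in S^{1}_{I_t^{x^{*}}(F)}(\mathcal F_t)\}$; the right-hand side is by definition the $L^{1}$-closure of $\{\langle x^{*},E[g_t|\mathcal F_s]\rangle:g_t\in S^{1}_{I_t(F)}(\mathcal F_t)\}$; and by \eqref{eq:017} (applied with the convex set-valued random variable $I_t(F)$ in the role of $F_t$) the set $S^{1}_{I_t^{x^{*}}(F)}(\mathcal F_t)$ is itself the $L^{1}$-closure of $\{\langle x^{*},g_t\rangle:g_t\in S^{1}_{I_t(F)}(\mathcal F_t)\}$.

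For the inclusion $\supseteq$, fix $g_t\in S^{1}_{I_t(F)}(\mathcal F_t)$. Then $\langle x^{*},g_t\rangle$ is an $\mathcal F_t$-measurable $L^{1}$-selection of $I_t^{x^{*}}(F)$, and since a bounded linear functional commutes with conditional expectation, $E[\langle x^{*},g_t\rangle\,|\,\mathcal F_s]=\langle x^{*},E[g_t|\mathcal F_s]\rangle$; thus $\langle x^{*},E[g_t|\mathcal F_s]\rangle$ belongs to $\{E[h|\mathcal F_s]:h\in S^{1}_{I_t^{x^{*}}(F)}(\mathcal F_t)\}$, and passing to $L^{1}$-closures gives $\supseteq$. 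For the inclusion $\subseteq$, fix $h\in S^{1}_{I_t^{x^{*}}(F)}(\mathcal F_t)$; by the density statement at the end of the previous paragraph there are $g_t^{(n)}\in S^{1}_{I_t(F)}(\mathcal F_t)$ with $\langle x^{*},g_t^{(n)}\rangle\to h$ in $L^{1}$, and since conditional expectation is an $L^{1}$-contraction,
\begin{equation*}
\langle x^{*},E[g_t^{(n)}|\mathcal F_s]\rangle=E[\langle x^{*},g_t^{(n)}\rangle\,|\,\mathcal F_s]\longrightarrow E[h|\mathcal F_s]\quad\text{in }L^{1},
\end{equation*}
so $E[h|\mathcal F_s]$ lies in the $L^{1}$-closure of $\{\langle x^{*},E[g_t|\mathcal F_s]\rangle:g_t\in S^{1}_{I_t(F)}(\mathcal F_t)\}=S^{1}_{E^{x^{*}}[I_t(F)|\mathcal F_s]}(\mathcal F_s)$. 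Since that set is closed, taking the $L^{1}$-closure of $\{E[h|\mathcal F_s]:h\in S^{1}_{I_t^{x^{*}}(F)}(\mathcal F_t)\}$ yields $\subseteq$. Combining the two inclusions gives the desired equality of selection sets, hence $E[I_t^{x^{*}}(F)|\mathcal F_s]=E^{x^{*}}[I_t(F)|\mathcal F_s]$ a.s., and the same argument gives the corresponding identity for $J$.

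I expect the only real care to be needed in the bookkeeping of the three nested $L^{1}$-closures — the one in $S^{1}_{I_t^{x^{*}}(F)}(\mathcal F_t)$, the one in the Hiai--Umegaki definition of the set-valued conditional expectation, and the one defining $E^{x^{*}}[I_t(F)|\mathcal F_s]$: one must check that each generating family is $\mathcal F_s$-decomposable so that its $L^{1}$-closure is exactly the selection set of the associated set-valued random variable, and that a single approximating sequence survives all the continuity/contraction estimates. The only analytic inputs are the $L^{1}$-contractivity of the conditional expectation and the boundedness of $x^{*}$; beyond that, the proof is a direct transcription of the argument for Theorem 4.5 in \cite{Zha4}.
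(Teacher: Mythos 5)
Your argument is correct and is essentially the proof the paper intends: the paper gives no explicit proof of Theorem \ref{th:002}, deferring to Theorem 4.5 of \cite{Zha4}, and your selection-set argument --- reducing to $S^{1}$-sets via the Hiai--Umegaki representation, invoking \eqref{eq:017} for the convex integrably bounded random set $I_t(F)$, and commuting the bounded functional $x^{*}$ with the Bochner conditional expectation together with its $L^{1}$-contractivity --- is exactly that standard route. The bookkeeping of the nested closures and the $\mathcal F_s$-decomposability of the generating families is handled correctly, so nothing further is needed.
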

\begin{theorem}\label{thm:unnmartingale}
Assume a non-degenerate convex set-valued stochastic process
$F=\{F_{t}: t\in[0, T]\} \in \mathscr{M}$. The integral process
$\{I_{t}(F): t\in (0,T]\}$ is not a set-valued martingale
\end{theorem}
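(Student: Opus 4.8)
The plan is to reduce the Banach-space statement to the interval-valued case already handled in Theorem~\ref{thm:interval}, using the machinery of bounded linear functionals developed in Theorems~\ref{th:weak integral} and~\ref{th:002}. The key observation is that a convex set-valued process is non-degenerate precisely when, for some choice of $x^{*}\in\frak X^{*}$, the ``projected'' interval-valued process $F^{x^{*}}=\{F_{t}^{x^{*}}:t\in[0,T]\}$ is a proper interval on a set of positive measure in $[0,T]\times Z\times\Omega$. So first I would argue: since $F$ is non-degenerate, there is $t_{0}\in(0,T]$ (in fact a positive-measure set of times) for which $F_{t_{0}}$ contains at least two distinct points $a\neq b$ on an event of positive probability; by Hahn--Banach pick $x^{*}$ with $\langle x^{*},a\rangle\neq\langle x^{*},b\rangle$, so that $F_{t_{0}}^{x^{*}}$ is a genuinely proper interval, and the same remains true after integrating against the (finite-variation) Poisson measure.

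Next I would suppose, for contradiction, that $\{I_{t}(F):t\in(0,T]\}$ is a set-valued $\mathcal F_{t}$-martingale, i.e. $E[I_{t}(F)|\mathcal F_{s}]=I_{s}(F)$ for all $0\le s\le t\le T$. Applying the functional $x^{*}$ chosen above and using Theorem~\ref{thm:unnmartingale}'s companion results: by Theorem~\ref{th:002}, $E[I_{t}^{x^{*}}(F)|\mathcal F_{s}]=E^{x^{*}}[I_{t}(F)|\mathcal F_{s}]$, and the martingale identity $S^{1}_{E[I_{t}(F)|\mathcal F_{s}]}(\mathcal F_{s})=S^{1}_{I_{s}(F)}(\mathcal F_{s})$ forces $S^{1}_{E^{x^{*}}[I_{t}(F)|\mathcal F_{s}]}(\mathcal F_{s})=cl\{\langle x^{*},g\rangle:g\in S^{1}_{I_{s}(F)}(\mathcal F_{s})\}=S^{1}_{I_{s}^{x^{*}}(F)}(\mathcal F_{s})$, hence $E[I_{t}^{x^{*}}(F)|\mathcal F_{s}]=I_{s}^{x^{*}}(F)$. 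That is, $\{I_{t}^{x^{*}}(F):t\in(0,T]\}$ is an interval-valued martingale. But by Theorem~\ref{th:weak integral}, $I_{t}^{x^{*}}(F)=I_{t}(F^{x^{*}})$ a.s., so $\{I_{t}(F^{x^{*}}):t\in(0,T]\}$ is an interval-valued martingale where $F^{x^{*}}$ is a proper interval-valued process in $\mathscr M$ — directly contradicting Theorem~\ref{thm:interval}. This contradiction shows $\{I_{t}(F):t\in(0,T]\}$ cannot be a set-valued martingale.

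The step I expect to be the main obstacle is the first one: carefully showing that non-degeneracy of the set-valued process $F$ (a property of the sets $F_{t}(\omega)$) transfers to \emph{properness of the interval integral} $I_{t}(F^{x^{*}})$ for a suitable $x^{*}$ and for a set of $t$ of positive Lebesgue measure, so that Theorem~\ref{thm:interval} genuinely applies. One has to be slightly careful about the order of quantifiers (the ``two distinct points'' event may depend on $t$, $z$, $\omega$) and about the fact that $F_{t}^{x^{*}}$ could collapse to a singleton for some $x^{*}$; a measurable-selection argument, choosing two $\mathscr S$-selections $f,g\in S(F)$ with $f\neq g$ on a set of positive $\lambda\times\nu\times P$-measure and then separating $f(t,z,\omega)$ from $g(t,z,\omega)$ by a functional (using separability of $\frak X$ to reduce to a countable family of $x^{*}$'s), closes this gap. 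Once properness is secured, the rest is a clean descent through $x^{*}$ to the already-proved one-dimensional theorem, together with the routine observation that the Jordan-decomposition argument used in Example~1 and Theorem~\ref{thm:interval} shows $\langle x^{*},I_{t}(F)\rangle$ is a proper interval whenever the integrand is not single-valued a.e.
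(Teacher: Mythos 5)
Your overall route is exactly the paper's: assume $\{I_{t}(F)\}$ is a set-valued martingale, push everything through a bounded linear functional $x^{*}$ via Theorems~\ref{th:weak integral} and~\ref{th:002} to conclude that $\{I_{t}(F^{x^{*}})\}$ is an interval-valued martingale, and contradict Theorem~\ref{thm:interval}; that half of your argument matches the paper essentially line for line. The one place you diverge --- and the step you rightly flag as the main obstacle --- is how to produce a single $x^{*}$ for which $F^{x^{*}}$ is a \emph{proper} interval-valued process. Your fix (choose two selections $f\neq g$ of $F$ differing on a set of positive $\lambda\times\nu\times P$-measure and separate them using a countable separating family in $\frak X^{*}$) is workable, but the paper uses a cleaner device: it observes that the single deterministic set $E[I_{T}(F)]=E\big[\int_{0}^{T+}\int_{Z}F_{s}(z)\tilde{N}(dsdz)\big]$ is a non-degenerate convex subset of $\frak X$, applies Hahn--Banach once to separate two of its points, and then runs the linearity identity $\langle x^{*},E[\int F\tilde{N}]\rangle=E[\int\langle x^{*},F\rangle\tilde{N}]$ \emph{backwards}: since the left-hand side is a proper interval, the integrand $\langle x^{*},F(s,z,\omega)\rangle$ cannot collapse to a singleton a.e., so $F^{x^{*}}$ is a proper interval-valued process for that one fixed, non-random $x^{*}$. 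This sidesteps the quantifier bookkeeping over $(t,z,\omega)$ entirely. Either way the reduction to Theorem~\ref{thm:interval} then goes through, so your proposal is sound.
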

\begin{proof}
Since  for each $t$, $F_{t}$ is a non-degenerate closed convex
subset of $\frak X$ for a.e. $(z,\omega)$ then so is the integral
$I_{T}(F)$ a.s. Moreover, the expectation $E[I_{T}(F)]$ is a
non-degenerate convex subset of $\frak X$. Taking $x,y\in
E[I_{T}(F)]$ and $x\neq y$, by using the Hahn-Banach extension
theorem of functionals,  there exists an $x^{*}\in \frak X^{*}$ (
independent of $t,z,\omega$), such that $x^{*}(x)\neq x^{*}(y)$. Indeed, set $A(x,y)=\{a(x-y): a\in\mathbb{R}\}$ and define $<\phi,
a(x-y)>=a\|x-y\|$ for every $a(x-y)\in A(x,y)$. It is observed that
$\phi$ is a bounded linear functional with operator norm
$\|\phi\|=1$ defined in the subspace $A(x,y)$. Then by the
Hahn-Banach extension theorem (c.f.\cite{Meg}), there exists a
bounded linear functional $x^{*}: \frak X\rightarrow \mathbb{R}$
such that $x^{*}$ restricted to $A(x,y)$ is equal to $\phi$ and
$\|x^{*}\|=1$.

\noindent On the other hand, for the linearity of $x^{*}$,
 \begin{equation*}
<x^{*}, E\Big[\int_{0}^{T+}\int_{Z}F_{s}(z)\tilde{N}(dsdz)\Big]>
= E\Big[\int_{0}^{T+}\int_{Z}<x^{*},
F_{s}(z)>\tilde{N}(dsdz)\Big].
 \end{equation*}
 Since
 $E\Big[\int_{0}^{T+}\int_{Z}F(s,z,\omega)\tilde{N}(dsdz)\Big]$ is
 a non-degenerate convex subset of $\frak X$, by the choice of
 $x^{*}$, $<x^{*},
 E\Big[\int_{0}^{T+}\int_{Z}F_{s}(z)\tilde{N}(dsdz)\Big]>$ is
 a proper interval, which implies $E\Big[\int_{0}^{T+}\int_{Z}<x^{*},
F(s,z,\omega)>\tilde{N}(dsdz)\Big]$ is a proper interval.
Furthermore, by the convexity of $F(s,z,\omega)$, $<x^{*},
F(s,z,\omega)>$ is a proper interval for a.e.$(s,z,\omega)$. That
means the stochastic processes $F^{x^{*}}$ is a proper
interval-valued process.

By Theorem \ref{thm:interval}, the interval-valued stochastic
process $\{I_{t}(F^{x^{*}}), \mathcal{F}_{t}:t\in(0,T]\}$ is not a
proper interval-valued martingale. As a further result, we will show
that $\{I_{t}(F), \mathcal{F}_{t}:t\in(0,T]\}$ is not a ${\bf
K}(\frak X)$-valued martingale.

Otherwise, suppose $\{I_{t}(F), \mathcal{F}_{t} :t\in(0,T]\}$ is  a
${\bf K}(\frak X)$-valued martingale. Taking the same functional
$x^{*}$ as above, for each $t$, both $I_{t}(F^{x^{*}})$ and
$I_{t}(F)$ are non-degenerate convex sets a.s. Then
$\{I_{t}(F^{x^{*}}):t\in (0,T]\}$ is a proper interval stochastic
process. By the property of set-valued martingale, we have
\begin{equation}\label{eq:013}
S_{I_{s}(F)}^{1}(\mathcal{F}_{s})=S_{E[I_{t}(F)|\mathcal{F}_{s}]}^{1}(\mathcal{F}_{s}).
\end{equation}
According to Theorem \ref{th:weak integral}, for any $0<s<t\leq T$,
we obtain
\begin{equation*}
\begin{split}
S_{I_{s}(F^{x^{*}})}^{1}(\mathcal{F}_{s})&=cl\{<x^{*}, g_{s}>:
g_{s}\in S_{I_{s}(F)}^{1}(\mathcal{F}_{s})\}\\
&=cl\{<x^{*}, g_{s}>:
g_{s}\in S_{E[I_{t}(F)|\mathcal{F}_{s}]}^{1}(\mathcal{F}_{s})\} \ (by\ \eqref{eq:013})\\
&=S_{E^{x^{*}}[I_{t}(F)|\mathcal{F}_{s}]}^{1}(\mathcal{F}_{s}) \ (\
by\ \eqref{eq:017})\\
&=S_{E[I^{x^{*}}_{t}(F)|\mathcal{F}_{s}]}^{1}(\mathcal{F}_{s})\ (by\
Theorem\ \ref{th:002})\\
&=S_{E[I_{t}(F^{x^{*}})|\mathcal{F}_{s}]}^{1}(\mathcal{F}_{s})\ (by\
Theorem\ \ref{th:weak integral}),
\end{split}
\end{equation*}
which implies that $\{I_{t}(F^{x^{*}}), \mathcal{F}_{t}:t\in(0,T]\}$
is a proper interval-valued martingale according to the equivalent
conditions (see e.g. Theorem 3.1. in \cite{Zha4}), a contradiction to
Theorem \ref{thm:interval}.
\end{proof}
\begin{remark}
Theorem 3.7 in our previous paper \cite{2012} states that the integral $\{I_{t}(F),t=0,...,T\}$ is a set-valued martingale with a very simple proof. In fact at that time we carelessly misused the martingale equivalent condition of Theorem 3.1. in \cite{Zha4}. The martingale equivalent condition is for each $t$,
$$
 S_{I_{t}(F)}^{1}({\mathcal{F}}_{t})
=cl\{g_{t}: (g_{s})_{s\in [0, T]}\in {\bf
 MS(F)}\},
$$  where $g$ is an $\frak X$-valued martingale. The condition is different from the following Castaing representation
$$
I_{t}(F)(\omega)=cl\{\int_{0}^{t+}\int_{Z}f^{i}_{s}(z)\tilde{N}(dsdz)(\omega):
i=1, 2,...\} \ a.s.
$$
The latter is weaker. So we can not get the martingale property of $\{I_{t}(F), t\in[0, T]\}$ from the Castaing representation even though for each $i$, $\{\int_{0}^{t+}\int_{Z}f^{i}_{s}(z)\tilde{N}(dsdz),t\in [0,T]\}$ is an-$\frak X$-valued martingale.
\end{remark}
In \cite{2012}, Theorem 3.3 shows that $\{I_{t}(F)\}$ and  $\{J_{t}(F)\}$ are $L^{1}$-integrably bounded. We now show the $L^{2}$-integrable boundedness.
Set
$$S_{I_{t}(F)}^{2}(\mathcal F_{t}):=\overline{de}_{L^{2}}\{\int_
{0}^{t+}\int_{Z}f_{s}(z)\tilde{N}(dsdz): (f_{s})_{s\in[0, T]}\in
S(F)\},$$
$$S_{J_{t}(F)}^{2}(\mathcal F_{t}):=\overline{de}_{L^{2}}\{\int_
{0}^{t+}\int_{Z}f_{s}(z)N(dsdz): (f_{s})_{s\in[0, T]}\in S(F)\},$$
where the closure is taken in $L^{2}$. We have the following result:
\begin{lemma}\label{lem:4}
Assume a set-valued stochastic process $\{F_{t}: t\in[0, T]\} \in
\mathscr{M}$. Then for every $t\in[0,T]$ $S_{I_{t}(F)}^{1}(\mathcal
{F}_{t})=S_{I_{t}(F)}^{2}(\mathcal {F}_{t})$, and
$S_{J_{t}(F)}^{1}(\mathcal {F}_{t})=S_{J_{t}(F)}^{2}(\mathcal
{F}_{t})$.
\end{lemma}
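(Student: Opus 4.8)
The plan is to deduce the two identities from the $L^{2}$-integrable boundedness of the set-valued integrals, which I first establish by a pathwise domination, and then to upgrade the $L^{1}$-approximations defining $\overline{de}\,\tilde\Gamma_{t}$ and $\overline{de}\,\Gamma_{t}$ to $L^{2}$-approximations by dominated convergence. \emph{Step 1 (pathwise domination).} For any $f\in S(F)$ one has $\|f_{s}(z,\omega)\|\le\|F_{s}(z,\omega)\|_{\bf K}$ for a.e.\ $(s,z,\omega)$, so, since $N(dsdz)$ and $|\tilde N|(dsdz)=\tilde N^{+}(dsdz)+\tilde N^{-}(dsdz)$ are positive measures of finite variation a.s.,
$$\Big\|\int_{0}^{t+}\!\!\int_{Z}f_{s}(z)N(dsdz)(\omega)\Big\|\le \tilde h_{t}(\omega),\qquad \Big\|\int_{0}^{t+}\!\!\int_{Z}f_{s}(z)\tilde N(dsdz)(\omega)\Big\|\le h_{t}(\omega)\quad a.s.,$$
where $\tilde h_{t}(\omega):=\int_{0}^{t+}\int_{Z}\|F_{s}(z)\|_{\bf K}N(dsdz)(\omega)$ and $h_{t}(\omega):=\int_{0}^{t+}\int_{Z}\|F_{s}(z)\|_{\bf K}|\tilde N|(dsdz)(\omega)$. \emph{Step 2 ($L^{2}$-bound).} Because $F\in\mathscr{M}$, the $\mathbb R$-valued $\mathscr{S}$-predictable process $(s,z,\omega)\mapsto\|F_{s}(z,\omega)\|_{\bf K}$ belongs to $\mathscr{L}$; applying Theorem \ref{thm:000} to this scalar integrand gives $E[\tilde h_{t}^{2}]\le C\int_{0}^{t}\int_{Z}E[\|F_{s}(z)\|_{\bf K}^{2}]ds\,\nu(dz)<\infty$, and using $|\tilde N|\le N+ds\,\nu(dz)$ (minimality of the Jordan decomposition) together with the Cauchy--Schwarz inequality for the $ds\,\nu(dz)$-part yields $h_{t}\in L^{2}(\Omega;\mathbb R)$ as well. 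Hence $I_{t}(F)$ and $J_{t}(F)$ are $L^{2}$-integrably bounded, with $\|I_{t}(F)(\omega)\|_{\bf K}\le h_{t}(\omega)$ and $\|J_{t}(F)(\omega)\|_{\bf K}\le\tilde h_{t}(\omega)$ a.s.

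Next I prove the two inclusions for $I_{t}(F)$; the argument for $J_{t}(F)$ is identical with $N$ in place of $\tilde N$ and $\tilde h_{t}$ in place of $h_{t}$. For ``$\supseteq$'': every element of $S_{I_{t}(F)}^{1}(\mathcal F_{t})$ is an $\mathcal F_{t}$-measurable selection of $I_{t}(F)$, hence pointwise dominated by $h_{t}\in L^{2}$ by Step 2, so $S_{I_{t}(F)}^{1}(\mathcal F_{t})\subseteq L^{2}(\Omega;\frak X)$; this set is decomposable and $L^{2}$-closed (an $L^{2}$-convergent sequence of selections admits an a.e.-convergent subsequence whose limit is again a selection of the closed-valued $I_{t}(F)$), and it contains $\tilde\Gamma_{t}$, so by minimality $S_{I_{t}(F)}^{2}(\mathcal F_{t})=\overline{de}_{L^{2}}\tilde\Gamma_{t}\subseteq S_{I_{t}(F)}^{1}(\mathcal F_{t})$. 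For ``$\subseteq$'': take $g\in S_{I_{t}(F)}^{1}(\mathcal F_{t})=\overline{de}\,\tilde\Gamma_{t}$, so $g=L^{1}\text{-}\lim_{n}g_{n}$ where each $g_{n}$ is a finite decomposable combination $\sum_{i}\chi_{A_{i}}\int_{0}^{t+}\int_{Z}f^{(i)}_{s}(z)\tilde N(dsdz)$ with $f^{(i)}\in S(F)$; by Step 1 each $g_{n}$ satisfies $\|g_{n}(\omega)\|\le h_{t}(\omega)$, and the same bound holds for $g$. Passing to an a.e.-convergent subsequence and applying dominated convergence with dominant $(2h_{t})^{2}\in L^{1}$ gives $g_{n_{k}}\to g$ in $L^{2}$; since each $g_{n_{k}}$ lies in the $L^{2}$-closed decomposable set $\overline{de}_{L^{2}}\tilde\Gamma_{t}$, so does $g$. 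This yields $S_{I_{t}(F)}^{1}(\mathcal F_{t})\subseteq\overline{de}_{L^{2}}\tilde\Gamma_{t}=S_{I_{t}(F)}^{2}(\mathcal F_{t})$, completing the proof of the first equality, and hence of both.

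The main obstacle is Step 2, i.e.\ producing a \emph{single} $L^{2}$ dominating random variable $h_{t}$ (resp.\ $\tilde h_{t}$) valid simultaneously for all integral selections and for all finite decomposable combinations thereof. Mere $L^{2}$-norm boundedness of $\tilde\Gamma_{t}$ (which follows directly from Theorem \ref{thm:000}) would not be enough, because the decomposable hull of a norm-bounded set need not be norm-bounded; it is precisely the pathwise estimate of Step 1, combined with the finiteness of the variation of $N$ and $\tilde N$, that makes the dominated-convergence upgrade in the last step legitimate.
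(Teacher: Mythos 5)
Your proof is correct and follows essentially the same route as the paper: a pathwise domination of every decomposable combination of integral selections by $\int_{0}^{t+}\int_{Z}\|F_{s}(z)\|_{\bf K}N(dsdz)$ (resp.\ its analogue for $\tilde N$), the $L^{2}$-integrability of that dominant via Theorem \ref{thm:000}, and then dominated convergence to upgrade the $L^{1}$-closure to an $L^{2}$-closure. The only cosmetic difference is that you dominate the compensated integral by the total-variation measure $|\tilde N|$, while the paper splits $\tilde N=N-ds\,\nu(dz)$ and uses $\|a-b\|^{2}\le 2\|a\|^{2}+2\|b\|^{2}$; your closing remark about why mere $L^{2}$-norm boundedness of $\tilde\Gamma_{t}$ would not suffice is exactly the point of the paper's Step 1.
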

\begin{proof}
Obviously, $S_{I_{t}(F)}^{1}(\mathcal {F}_{t})\supset
S_{I_{t}(F)}^{2}(\mathcal {F}_{t})$, and $S_{J_{t}(F)}^{1}(\mathcal
{F}_{t})\supset S_{J_{t}(F)}^{2}(\mathcal {F}_{t})$. It is
sufficient to prove the converse inclusions.

{\em Step 1}: At first we shall show that $de \Gamma_{t}$ and $de
\tilde{\Gamma}_{t}$ are bounded in $L^{2}(\Omega; \frak X)$.

For any finite $\mathcal F_{t}$-measurable partition
$\{A_{1},...,A_{m}\} $ of $\Omega$ and a finite sequence
$\{f^{1},...,f^{m}\} \subset S(F),$
\begin{equation*}
\begin{split}
&E[\|\sum_{i=1}^{m}\chi_{A_{i}}\int_{0}^{t+}\int_{Z}f^{i}_{s}(z)N(dsdz)\|^{2}]\\
&=\sum_{i=1}^{m}E[\chi_{A_{i}}\|\int_{0}^{t+}\int_{Z}f^{i}_{s}(z)N(dsdz)\|^{2}]\\
&\leq\sum_{i=1}^{m}E[\chi_{A_{i}}(\int_{0}^{t+}\int_{Z}\|f^{i}_{s}(z)\|N(dsdz))^{2}]\\
&\leq
\sum_{i=1}^{m}E[\chi_{A_{i}}(\int_{0}^{t+}\int_{Z}\|F_{s}(z)\|_{\bf
K}N(dsdz))^{2}]\\
&=E[(\int_{0}^{t+}\int_{Z}\|F_{s}(z)\|_{\bf K}N(dsdz))^{2}].
\end{split}
\end{equation*}
The process $\{\|F(t)\|_{\bf k}: t\in(0,T]\}$ is a real valued
predictable (with three parameters $t,z,\omega$) process since the
set-valued stochastic process $\{F(t),t\in(0,T]\}$ is
$\mathscr{S}$-predictable.  Then by Theorem \ref{thm:000}, we have
\begin{equation}\label{squarebounded}
E[(\int_{0}^{t+}\int_{Z}\|F_{s}(z)\|_{\bf K}N(dsdz))^{2}]\leq
CE[(\int_{0}^{t}\int_{Z}\|F_{s}(z)\|_{\bf K}^{2}ds\nu(dz))]<\infty,
\end{equation}
where $C$ is the constant that depends on $C_{\frak X}$.
The inequality \eqref{squarebounded} implies that $de\Gamma_{t}$ is bounded in
$L^{2}(\Omega, \frak X)$.

\begin{equation*}
\begin{split}
&E[\|\sum_{i=1}^{m}\chi_{A_{i}}\int_{0}^{t+}\int_{Z}f^{i}_{s}(z)\tilde{N}(dsdz)\|^{2}]\\
&=\sum_{i=1}^{m}E[\chi_{A_{i}}\|\int_{0}^{t+}\int_{Z}f^{i}_{s}(z)\tilde{N}(dsdz)\|^{2}]\\
&\leq
2\sum_{i=1}^{m}E[\chi_{A_{i}}\|\int_{0}^{t+}\int_{Z}f^{i}_{s}(z)N(dsdz)\|^{2}]+
2\sum_{i=1}^{m}E[\chi_{A_{i}}\|\int_{0}^{t}\int_{Z}f^{i}_{s}(z)ds\nu(dz)\|^{2}]\\
&\leq
2\sum_{i=1}^{m}E[\chi_{A_{i}}(\int_{0}^{t+}\int_{Z}\|f^{i}_{s}(z)\|N(dsdz))^{2}]+
2\sum_{i=1}^{m}E[\chi_{A_{i}}(\int_{0}^{t}\int_{Z}\|f^{i}_{s}(z)\|ds\nu(dz))^{2}]\\
 &\leq
2 \sum_{i=1}^{m}E[\chi_{A_{i}}(\int_{0}^{t+}\int_{Z}\|F_{s}(z)\|_{\bf
K}N(dsdz))^{2}]+
2 \sum_{i=1}^{m}E[\chi_{A_{i}}(\int_{0}^{t}\int_{Z}\|F_{s}(z)\|_{\bf K}ds\nu(dz))^{2}]\\
&=2E[(\int_{0}^{t+}\int_{Z}\|F_{s}(z)\|_{\bf K}N(dsdz))^{2}]+
2E[(\int_{0}^{t}\int_{Z}\|F_{s}(z)\|_{\bf K}ds\nu(dz))^{2}]\\
&\leq 2CE[\int_{0}^{t}\int_{Z}\|F_{s}(z)\|_{\bf K}^{2}ds\nu(dz))]+
 2T\nu(Z)E[\int_{0}^{t}\int_{Z}\|F_{s}(z)\|_{\bf K}^{2}ds\nu(dz))]\\
 &=2(C+T\nu(Z))E[\int_{0}^{t}\int_{Z}\|F_{s}(z)\|_{\bf
 K}^{2}ds\nu(dz))]<\infty,
\end{split}
\end{equation*}
which yields that $de\tilde{\Gamma}_{t}$ is bounded in
$L^{2}(\Omega, \frak X)$.

{\em Step 2}. We shall show that the closure of $de \Gamma_{t}$
($de\tilde{\Gamma}_{t}$) in $L^{1}$ is also a subset of $L^{2}(\Omega;
\frak X)$.

Taking any $h\in S_{J_{t}(F)}^{1}(\mathcal {F}_{t})$, there exists a
sequence $$\{h^{k}: k=1,2,...\}\subset de\Gamma_{t},$$ such that
$$E\|h^{k}-h\|\rightarrow 0\ \ as\ k\rightarrow +\infty.$$ Then
there exists a subsequence $\{h^{k_{i}}: i=1,2,...\}$ of $\{h^{k}:
k=1,2,...\}$ such that

$$\|h^{k_{i}}-h\|\rightarrow 0\ as \ i\rightarrow +\infty\ a.s.$$

For any $h^{k}$, we have $h^{k}\leq
\int_{0}^{t+}\int_{Z}\|F_{s}\|_{\bf K}N(dsdz)$ a.s. In addition,
$\int_{0}^{t+}\int_{Z}\|F_{s}\|_{\bf K}N(dsdz)$ is $L^{2}$-integrable.
Therefore, by the Lebesgue dominated convergence theorem, we obtain
$$E[\|h^{k_{i}}-h\|^{2}]\rightarrow 0\ as \ i\rightarrow +\infty\  $$

By the inequality,
$$\|h(\omega)\|^{2}\leq 2\|h(\omega)-h^{k_{i}}(\omega)\|^{2}+2\|h^{k_{i}}(\omega)\|^{2},\ a.s.$$
immediately, we obtain $h\in S^{2}_{J_{t}(F)}(\mathcal{F}_{t})$,
which implies $S^{1}_{J_{t}(F)}(\mathcal{F}_{t})\subset
S^{2}_{J_{t}(F)}(\mathcal{F}_{t})$. Similarly, we have
$S^{1}_{I_{t}(F)}(\mathcal{F}_{t})\subset
S^{2}_{I_{t}(F)}(\mathcal{F}_{t})$.
\end{proof}

By  Lemma \ref{lem:4} and its proof, we
get Theorem \ref{th:squarebounded} below, which is necessary to
guarantee the availability to study the set-valued stochastic
differential equation with set-valued jump part.
\begin{theorem}\label{th:squarebounded}
Assume a set-valued stochastic process $\{F_{t}, {\mathcal{F}}_{t}:
t\in[0, T]\} \in \mathscr{M}$. Then both $\{J_{t}(F)\}$ and
$\{I_{t}(F)\}$ are $L^{2}$-integrably bounded.
\end{theorem}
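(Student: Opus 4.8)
The plan is to read off Theorem~\ref{th:squarebounded} from Lemma~\ref{lem:4} together with the characterisation of $L^p$-integrable boundedness recalled in Section~\ref{author_sec:2}: a set-valued random variable $G\in\mathcal M(\Omega;\frak X)$ is $L^p$-integrably bounded if and only if $S^p_G$ is nonempty and bounded in $L^p(\Omega;\frak X)$. Thus it suffices to show, for each fixed $t\in[0,T]$, that $S^2_{I_t(F)}(\mathcal F_t)$ and $S^2_{J_t(F)}(\mathcal F_t)$ are nonempty and norm-bounded in $L^2(\Omega;\frak X)$. Nonemptiness is immediate: since $F\in\mathscr M$, the set $S(F)$ of $\mathscr S$-selections is nonempty, hence so are $\Gamma_t$, $\tilde\Gamma_t$, their decomposable hulls, and their $L^2$-closures.

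For boundedness I would invoke Step~1 of the proof of Lemma~\ref{lem:4}, which establishes precisely that every decomposable combination $\sum_{i=1}^m\chi_{A_i}\int_0^{t+}\!\!\int_Z f^i_s(z)N(dsdz)$ satisfies $E[\|\cdot\|^2]\le C\,E[\int_0^t\!\int_Z\|F_s(z)\|_{\bf K}^2\,ds\,\nu(dz)]$, and every decomposable combination built from $\tilde N$ satisfies $E[\|\cdot\|^2]\le 2(C+T\nu(Z))\,E[\int_0^t\!\int_Z\|F_s(z)\|_{\bf K}^2\,ds\,\nu(dz)]$, bounds which are finite because $F\in\mathscr M$ and, crucially, uniform in the finite $\mathcal F_t$-partition and the finite family of selections. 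Since the right-hand sides do not depend on the chosen element, $de\Gamma_t$ and $de\tilde\Gamma_t$ are bounded subsets of $L^2(\Omega;\frak X)$; taking $L^2$-closures preserves the bound (the $L^2$-norm is continuous), so $S^2_{J_t(F)}(\mathcal F_t)=\overline{de}_{L^2}\Gamma_t$ and $S^2_{I_t(F)}(\mathcal F_t)=\overline{de}_{L^2}\tilde\Gamma_t$ are bounded in $L^2(\Omega;\frak X)$, in fact uniformly in $t$ since $\int_0^t\le\int_0^T$.

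Finally, by Lemma~\ref{lem:4} one has $S^1_{I_t(F)}(\mathcal F_t)=S^2_{I_t(F)}(\mathcal F_t)$ and $S^1_{J_t(F)}(\mathcal F_t)=S^2_{J_t(F)}(\mathcal F_t)$, so these are genuinely the sets of $L^2$-selections of the set-valued random variables $I_t(F)$ and $J_t(F)$ determined in Definition~\ref{def:integral}. Applying the characterisation above with $p=2$ then gives that $I_t(F)$ and $J_t(F)$ are $L^2$-integrably bounded for every $t$, i.e. $\|I_t(F)\|_{\bf K},\,\|J_t(F)\|_{\bf K}\in L^2(\Omega;\mathbb R)$, which is the assertion.

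I do not expect a genuine obstacle: all the analytic content---the $L^2$ estimate derived from Theorem~\ref{thm:000} and the Burkholder-type inequalities applied to $\|F\|_{\bf K}$---is already packaged in Step~1 of Lemma~\ref{lem:4}. The only points needing a line of care are (i) that the $L^2$-closure of a set bounded in $L^2$ is again bounded, and (ii) that the Step~1 estimate is stated over \emph{decomposable} combinations, so it controls the whole hull $de\tilde\Gamma_t$ (and $de\Gamma_t$) rather than merely the raw integrals in $\tilde\Gamma_t$ (and $\Gamma_t$); both are already in place.
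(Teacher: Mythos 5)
Your proposal is correct and follows essentially the same route as the paper, which derives Theorem \ref{th:squarebounded} directly from Lemma \ref{lem:4} and Step 1 of its proof (the $L^{2}$-bounds on the decomposable hulls $de\Gamma_{t}$ and $de\tilde\Gamma_{t}$), combined with the characterisation of $L^{p}$-integrable boundedness via nonemptiness and boundedness of the selection set. Your write-up is in fact more explicit than the paper's one-line justification, but the content is identical.
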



If $\mathcal{F}$ is separable, by Theorem 3.5 in \cite{2012} , for
stochastic processes $\{I_{t}, \mathcal{F}_{t}: t\in(0,T]\}$ and
$\{J_{t}, \mathcal{F}_{t}: t\in(0,T]\}$, there exist
$\mathcal{F}\otimes\mathcal{B}([0,T])$-measurable and
$\mathcal{F}_{t}$-adapted versions. From now on, we always take the
measurable versions.
\begin{lemma}\label{lem:inequality1}
Assume $\mathcal F$ is separable with respect to $P$. For set-valued
stochastic processes $\{F_{t}\}_{t\in[0, T]}, \{G_{t}\}_{t\in[0,
T]}\in\mathscr{M}$, and for all $t$, we have
\begin{equation}
\begin{split}\label{eqn:110}
&H\Big(\int_{0}^{t+}\int_{Z}F_{s}(z)N(dsdz),\int_{0}^{t+}\int_{Z}G(s,z, \omega)N(dsdz)\Big)\\
&\leq \int_{0}^{t+}\int_{Z}H(F_{s}(z),G_{s}(z))N(dsdz)
\ a.s.
\end{split}
\end{equation}
\end{lemma}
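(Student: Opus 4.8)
The plan is to fix $t$ and prove the stated inequality between the two (nonnegative) random variables by bounding separately the two directed distances making up the Hausdorff metric; since the right-hand side of \eqref{eqn:110} is unchanged by the exchange $F\leftrightarrow G$, it is enough to establish, a.s.,
\[
\sup_{u\in J_{t}(F)(\omega)}d\big(u,J_{t}(G)(\omega)\big)\ \le\ \int_{0}^{t+}\int_{Z}H\big(F_{s}(z),G_{s}(z)\big)\,N(dsdz)(\omega),
\]
where $d(u,C)=\inf_{v\in C}\|u-v\|$, the bound with $F,G$ interchanged following verbatim. First I would record that $(s,z,\omega)\mapsto H(F_{s}(z),G_{s}(z))$ is $\mathscr S$-measurable (Hausdorff distance of two $\mathscr S$-measurable closed-valued maps into a separable Banach space) and is dominated by $\|F_{s}(z)\|_{\bf K}+\|G_{s}(z)\|_{\bf K}\in\mathscr L$, so the right-hand integral is a.s.\ finite. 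Next, using the representation theory of closed decomposable subsets of $L^{1}(\Omega,\mathcal F_{t},P;\frak X)$ that underlies Definition \ref{def:integral} (see \cite{Hia,2012}), I would choose a countable family $\{f^{i}\}_{i\ge1}\subset S(F)$ with $J_{t}(F)(\omega)=cl\{\int_{0}^{t+}\int_{Z}f^{i}_{s}(z)N(dsdz)(\omega):i\ge1\}$ a.s.; since $d(\cdot,J_{t}(G)(\omega))$ is $1$-Lipschitz, the left-hand supremum above equals $\sup_{i}d\big(\int_{0}^{t+}\int_{Z}f^{i}_{s}(z)N(dsdz)(\omega),J_{t}(G)(\omega)\big)$.

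The core step is a measurable-selection claim: for each $i$ and each $n\in\mathbb N$ there exists $g^{i,n}\in S(G)$ with $\|f^{i}_{s}(z)-g^{i,n}_{s}(z)\|\le H(F_{s}(z),G_{s}(z))+\tfrac1n$ for a.e.\ $(s,z,\omega)$. To produce it I would fix an $\mathscr S$-measurable Castaing representation $G_{s}(z,\omega)=cl\{\gamma^{j}_{s}(z,\omega):j\ge1\}$, observe that $d(f^{i}_{s}(z),G_{s}(z))=\inf_{j}\|f^{i}_{s}(z)-\gamma^{j}_{s}(z)\|\le H(F_{s}(z),G_{s}(z))$ because $f^{i}_{s}(z)\in F_{s}(z)$, and then, pointwise in $(s,z,\omega)$, take the least index $j$ with $\|f^{i}_{s}(z)-\gamma^{j}_{s}(z)\|\le d(f^{i}_{s}(z),G_{s}(z))+\tfrac1n$; this index is $\mathscr S$-measurable, so $g^{i,n}_{s}(z,\omega):=\gamma^{j(s,z,\omega)}_{s}(z,\omega)$ is an $\mathscr S$-predictable selection of $G$, and $\|g^{i,n}_{s}(z)\|\le\|f^{i}_{s}(z)\|+\|F_{s}(z)\|_{\bf K}+\|G_{s}(z)\|_{\bf K}+1$ together with $T\nu(Z)<\infty$ puts it in $\mathscr L$, hence $g^{i,n}\in S(G)$.

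To conclude I would use that on $(0,t]\times Z$ the measure $N(dsdz)$ is a.s.\ a finite counting measure (as $\nu(Z)<\infty$), so that $\|\int_{0}^{t+}\int_{Z}h_{s}(z)N(dsdz)\|\le\int_{0}^{t+}\int_{Z}\|h_{s}(z)\|N(dsdz)$ for every $h\in\mathscr L$. Applying this with $h=f^{i}-g^{i,n}$ and noting $\int_{0}^{t+}\int_{Z}g^{i,n}_{s}(z)N(dsdz)\in J_{t}(G)(\omega)$ a.s.\ (it lies in $\{\int_{0}^{t+}\int_{Z}g_{s}(z)N(dsdz):g\in S(G)\}\subset S^{1}_{J_{t}(G)}(\mathcal F_{t})$), one obtains, a.s.,
\[
d\Big(\int_{0}^{t+}\int_{Z}f^{i}_{s}(z)N(dsdz),\,J_{t}(G)(\omega)\Big)\ \le\ \int_{0}^{t+}\int_{Z}H(F_{s}(z),G_{s}(z))N(dsdz)\ +\ \tfrac1n\,N\big((0,t]\times Z\big).
\]
Intersecting the countably many a.s.\ events over $(i,n)$, letting $n\to\infty$ (using $N((0,t]\times Z)<\infty$ a.s.) and taking $\sup_{i}$ gives the one-sided bound; the symmetric bound and the maximum then yield \eqref{eqn:110}. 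Note that no convexity of $F$ or $G$ is needed.

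I expect the main obstacle to be the selection step together with the Castaing description of $J_{t}(F)$: both are elementary in spirit but genuinely rely on measurable-selection machinery (Kuratowski--Ryll-Nardzewski, and the decomposable-hull characterization of the set-valued integral), including the measurability of $(s,z,\omega)\mapsto d(f^{i}_{s}(z),G_{s}(z))$ and of the chosen index $j(\cdot)$. The remaining ingredients --- the triangle inequality for the $N$-integral, $1$-Lipschitzness of $d(\cdot,C)$, and the $\tfrac1n$-cleanup --- are routine.
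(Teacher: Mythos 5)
Your proof is correct, and its overall skeleton coincides with the paper's: bound each of the two directed distances in $H$ by working with a countable Castaing-type family $\{f^{i}\}\subset S(F)$ representing $J_{t}(F)$ (Theorem 3.5 of \cite{2012}, which is where the separability of $\mathcal F$ enters), produce approximate selections of $G$ close to each $f^{i}$, and apply the pathwise triangle inequality for the finite counting measure $N$. Where you differ is in how the approximate selections are manufactured. The paper takes, for each $i$, an $L^{1}$-minimizing sequence $g^{ij}\in S(G)$ with $\|f^{i}-g^{ij}\|_{\mathscr L^{1}}\downarrow d(f^{i},S(G))$, invokes the Hiai--Umegaki identity $d(f^{i},S(G))=\int d(f^{i}_{s}(z),G_{s}(z))$ to convert this into a.e.\ pointwise convergence along a subsequence, and then passes the convergence through the $N$-integral by dominated convergence using $\|F\|_{\bf K}+\|G\|_{\bf K}$ as the dominating function. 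You instead build, for each $i$ and $n$, a single $\mathscr S$-predictable selection $g^{i,n}\in S(G)$ satisfying the pointwise bound $\|f^{i}_{s}(z)-g^{i,n}_{s}(z)\|\le d(f^{i}_{s}(z),G_{s}(z))+\tfrac1n\le H(F_{s}(z),G_{s}(z))+\tfrac1n$ via a least-index argument on a Castaing representation of $G$; this replaces the decomposability identity, the subsequence extraction, and the dominated convergence step with one measurable-selection construction and the trivial remark that $\tfrac1n N((0,t]\times Z)\to 0$ a.s. Your route is somewhat more self-contained and gives a uniform-in-$(s,z,\omega)$ error, at the cost of having to verify the measurability of the chosen index; the paper's route leans on the established $L^{1}$-theory of decomposable sets. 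Both are complete arguments, and, as you note, neither requires convexity of $F$ or $G$.
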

\begin{proof}
By Theorem 3.5 in \cite{2012}, there exists a sequence $\{f^{i}:
i\in\mathbb{N}\}\subset S(F)$, such that
$$
F(t,z, \omega)= \mbox{cl}\big\{f^{i}(t,z, \omega): i\in
\mathbb{N}\big\} \ \mbox{a.e.} \ (t, z, \omega)
$$
and, for each $t\in[0, T]$,
$$
\int_{0}^{t+}\int_{Z}F_{s}(z)N(dsdz)=
\mbox{cl}\Big\{\int_{0}^{t+}\int_{Z}f^{i}_{s}(z)N(dsdz): i\in
\mathbb{N}\Big\}.
$$
For each $i\geq 1$, we can choose a sequence
 $\{g^{ij}: j\in \mathbb{N}\}\subset S(G)$ (this sequence depends on $i$), such that
 $$\|f^{i}-g^{ij}\|_{\mathscr{L}^{1}}\downarrow
 d\left(f^{i}, S(G)\right)
 \ (j\rightarrow +\infty),$$
 where
 $$\|f^{i}-g^{ij}\|_{\mathscr{L}^{1}}=\int_{\Omega}\int_{0}^{T}\int_{Z}\|f^{i}_{s}(z)-g^{ij}_{s}(z)\|ds\nu(dz)dp,$$
 and
$$d\left(f^{i}, S(G)\right)=\inf_{g\in S(G)}\|f^{i}-g\|_{\mathscr{L}^{1}}.$$
In fact,
\begin{equation}\label{eqn:equality}
\begin{split}
&\int_{\Omega}\int_{0}^{T+}\int_{Z}\|f^{i}_{s}(z)-g^{ij}_{s}(z)\|N(dsdz)dp\\
&=\int_{\Omega}\int_{0}^{T}\int_{Z}\|f^{i}_{s}(z)-g^{ij}_{s}(z)\|ds
\nu(dz)dp<+\infty
\end{split}
\end{equation}
since $F, G\in \mathscr{M}$.

 By \eqref{eqn:equality} and Theorem 2.2 in \cite{Hia}, we have
\begin{equation*}
\begin{split}
&d\left(f^{i}, S(G)\right)=\inf\limits_{g\in
 S(F)}\|f^{i}-g\|_{\mathscr{L}^{1}}\\
 &=\inf_{g\in
 S(G)}\int_{\Omega}\int_{0}^{T}\int_{Z}\|f^{i}_{s}(z)-g_{s}(z)\|ds\nu(dz)dP\\
 &=\inf_{g\in
 S(G)}\int_{\Omega}\int_{0}^{T+}\int_{Z}\|f^{i}_{s}(z)-g_{s}(z)\|N(dsdz)dP\\
 &=\inf_{g\in
 S(G)}\int_{\Omega}\int_{0}^{T+}\int_{Z}\|f^{i}_{s}(z)-g_{s}(z)\|N(dsdz)dP\\
 &=\int_{\Omega}\int_{0}^{T+}\int_{Z}\inf\limits_{y\in
 G_{s}(z)}\|f^{i}_{s}(z)-y\|N(dsdz)dP\\
 &=\int_{\Omega}\int_{0}^{T+}\int_{Z}d(f^{i}_{s}(z),
 G_{s}(z))N(dsdz)dP.
 \end{split}
 \end{equation*}
Namely, noticing that $\|f^{i}-g^{ij}\|_{\mathscr{L}^{1}}\geq
 d\left(f^{i}, S(G)\right)$ and $\|f^{i}(s,z, \omega)-g^{ij}(s,z, \omega)\|$ $\geq d(f^{i}(s,z,\omega),
 G(s,z, \omega))$ for a.e.\ $(s, z, \omega)$, then
 for any $\varepsilon>0$, there exists a natural number $M$ such
 that
 for any $j\geq M$,
 \begin{eqnarray*}
 \varepsilon &>&\Big|\int_{\Omega}\int_{0}^{T+}\int_{Z}\|f^{i}_{s}(z)-g^{ij}_{s}(z)\|N(dsdz)dP\\
 &&-\int_{\Omega}\int_{0}^{T+}\int_{Z}d(f^{i}_{s}(z),
 G_{s}(z))N(dsdz)dP\Big|\\
 &=&\int_{\Omega}\int_{0}^{T+}\int_{Z}\|f^{i}_{s}(z)-g^{ij}_{s}(z)\|N(dsdz)dP\\
 &&-\int_{\Omega}\int_{0}^{T+}\int_{Z}d(f^{i}_{s}(z),
 G_{s}(z))N(dsdz)dP\\
 &=&\int_{\Omega}\int_{0}^{T+}\int_{Z}\Big(\|f^{i}_{s}(z)-g^{ij}_{s}(z)\|-d(f^{i}_{s}(z),
 G_{s}(z))\Big)N(dsdz)dP\\
 &=&\int_{\Omega}\int_{0}^{T+}\int_{Z}\Big|\|f^{i}_{s}(z)-g^{ij}_{s}(z)\|-d(f^{i}_{s}(z),
 G_{s}(z))\Big|N(dsdz)dP.
 \end{eqnarray*}
 Hence there exists a subsequence of $\{g^{ij}: j\in \mathbb{N}\}$, denoted
 as $\{g^{ij_{k}}: k\in \mathbb{N}\}$ such that
 $$
\|f^{i}(s,z, \omega)-g^{ij_{k}}(s,z, \omega)\|\rightarrow
 d(f^{i}(s,z, \omega), G(s,z, \omega))
 \ (k\rightarrow +\infty) \ \mbox{a.e.} \ (s, z, \omega).
 $$
 \noindent Because $\{F_{t}\}_{t\in[0, T]}$ and $\{G_{t}\}_{t\in[0,
T]}$ are in $\mathscr{M}$, we have
\begin{equation}\label{eqn:3.51}
\int_{\Omega}\int_{0}^{T+}\int_{Z}(\|F_{s}(z)\|_{\bf K}+
    \|G_{s}(z)\|_{\bf K})N(dsdz)dp < \infty,
\end{equation}
which yields
\begin{equation}\label{eq:dominated}
    \int_{0}^{T+}\int_{Z}(\|F_{s}(z)\|_{\bf K}+
    \|G_{s}(z)\|_{\bf K})N(dsdz)<\infty \ \mbox{a.s.},
\end{equation}
Since
 $$\|f^{i}(s,z, \omega)-g^{ij_{k}}(s,z, \omega)\|\leq \|F(s,z, \omega)\|_{\bf K}+\|G(s,z, \omega)\|_{\bf K}
  \ \mbox{for a.e.} (s, ,z, \omega)$$ together with \eqref{eq:dominated},
by the Lebesgue dominated convergence theorem, for all $t$ and almost sure $\omega$, we obtain that
$$\int_{0}^{t+}\int_{Z}\|f^{i}_{s}(z)-g^{ij_{k}}_{s}(z)\|N(dsdz)\rightarrow \int_{0}^{t+}\int_{Z}d(f^{i}_{s}(z), G_{s}(z))N(dsdz)$$
 when $ k\rightarrow +\infty$.
 Therefore, for all $t$ and almost sure $\omega$
 $$\inf\limits_{k}\int_{0}^{t+}\int_{Z}\|f^{i}_{s}(z)-g^{ij_{k}}_{s}(z)\|N(dsdz)
 \leq \int_{0}^{t+}\int_{Z}d(f^{i}_{s}(z), G_{s}(z))N(dsdz).$$
Hence, for all $t$ and almost sure $\omega$, we have
 \begin{eqnarray*}
 &&\sup\limits_{x\in\int_{0}^{t+}\int_{Z}F_{s}(z)ds}d\Big(x,
 \int_{0}^{t+}\int_{Z}G_{s}(z)N(dsdz)\Big)\\
 &&\leq\sup\limits_{i}\inf\limits_{j}\|\int_{0}^{t+}\int_{Z}f^{i}_{s}(z)N(dsdz)-
 \int_{0}^{t+}\int_{Z}g^{ij}_{s}(z)N(dsdz)\|\\
 &&\leq\sup\limits_{i}\inf\limits_{k}\|\int_{0}^{t+}\int_{Z}f^{i}_{s}(z)N(dsdz)
 -\int_{0}^{t+}\int_{Z}g^{ij_{k}}_{s}(z)ds\|\\
 &&\leq \sup\limits_{i}\inf\limits_{k}\int_{0}^{t+}\int_{Z}\|f^{i}_{s}(z)-g^{ij_{k}}_{s}(z)\|N(dsdz)\\
 &&\leq \sup\limits_{i}\int_{0}^{t+}\int_{Z}d(f^{i}_{s}(z),G_{s}(z))N(dsdz)\\
 &&\leq
 \int_{0}^{t+}\int_{Z}\sup\limits_{i}d(f^{i}_{s}(z),G_{s}(z))N(dsdZ).
 \end{eqnarray*}
 Similarly, by Theorem 3.5 in \cite{2012}, there exists a sequence $\{g^{m}: m\in\mathbb{N}\}\subset
S(G)$ such that
$$
G(t,z, \omega)= \mbox{cl}\big\{g^{m}(t,z, \omega): m\in\mathbb{N}
\big\} \ \mbox{a.e.} \ (t, z, \omega)
$$
and, for each $t\in[0, T]$,
$$
\int_{0}^{t+}\int_{Z}G_{s}(z)N(dsdz)=
\mbox{cl}\Big\{\int_{0}^{t+}\int_{Z}g^{m}_{s}(z)N(dsdz): m\in
\mathbb{N}\Big\}.
$$
 In the same way as above, we obtain that for all $t$ and
 almost sure $\omega$,
 \begin{equation*}
 \begin{split}
&\sup\limits_{y\in\int_{0}^{t+}\int_{Z}G_{s}(z)N(dsdz)}d\Big(y,
 \int_{0}^{t+}\int_{Z}F_{s}(z)N(dsdz)\Big)\\
&\leq \int_{0}^{t+}\int_{Z}\sup\limits_{m}d(g^{m}_{s}(z),F_{s}(z))N(dsdz).
\end{split}
\end{equation*}
Therefore, the inequality
 \begin{equation*}
 \begin{split}
&H\Big(\int_{0}^{t+}\int_{Z}F_{s}(z)N(dsdz),\int_{0}^{t+}\int_{Z}G_{s}(z)N(dsdz)\Big)\\
&\leq \int_{0}^{t+}\int_{Z}H(F_{s}(z),G_{s}(z))N(dsdz)
\end{split}
\end{equation*}
holds for all $t$ and almost sure $\omega$.
\end{proof}
\begin{theorem}\label{thm:300}
Assume $\mathcal F$ is separable with respect to $P$.   Let
$\{F_{t}\}_{t\in[0, T]}$ and $\{G_{t}\}_{t\in[0, T]}$ be set-valued
stochastic processes in $\mathscr{M}$. Then for all $t$, it follows
that
\begin{equation}\label{eqn:111}
\begin{split}
&E\big[H\Big(\int_{0}^{t+}\int_{Z}F_{s}(z)N(dsdz),\int_{0}^{t+}\int_{Z}G_{s}(z)N(dsdz)\Big)\big]\\
&\leq
E\big[\int_{0}^{t+}\int_{Z}H(F_{s}(z),G_{s}(z))N(dsdz)\big]\\
&=E\big[\int_{0}^{t}\int_{Z}H(F_{s}(z),G_{s}(z))ds\nu{dz}\big]
\end{split}
\end{equation}
and
\begin{equation}\label{eqn:112}
\begin{split}
&E\big[H^{2}\Big(\int_{0}^{t+}\int_{Z}F_{s}(z)N(dsdz),\int_{0}^{t+}\int_{Z}G_{s}(z)N(dsdz)\Big)\big]\\
&\leq C
E\big[\int_{0}^{t+}\int_{Z}H^{2}(F_{s}(z),G_{s}(z))N(dsdz)\big]\\
&=C
E\big[\int_{0}^{t}\int_{Z}H^{2}(F_{s}(z),G_{s}(z))ds\nu(dz)\big]
\end{split}
\end{equation}
where $C$ is the constant appearing in Theorem \ref{thm:000}.
\end{theorem}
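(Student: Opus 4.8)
The plan is to obtain both inequalities from the pathwise estimate of Lemma \ref{lem:inequality1} combined with the scalar moment bounds of Theorem \ref{thm:000}, applied to the real-valued integrand $(s,z,\omega)\mapsto H(F_s(z),G_s(z))$. First I would note that this map is a nonnegative $\mathscr{S}$-predictable process (measurability of the Hausdorff distance of two $\mathscr{S}$-measurable ${\bf K}(\frak X)$-valued maps is standard, or one may read it off from the Castaing representations below), and that, since $H(A,B)\le\|A\|_{\bf K}+\|B\|_{\bf K}$ for $A,B\in{\bf K}_b(\frak X)$, the hypothesis $F,G\in\mathscr{M}$ yields
\[
E\Big[\int_0^{T}\!\!\int_Z H^2(F_s(z),G_s(z))\,ds\,\nu(dz)\Big]\le 2E\Big[\int_0^{T}\!\!\int_Z\big(\|F_s(z)\|_{\bf K}^2+\|G_s(z)\|_{\bf K}^2\big)\,ds\,\nu(dz)\Big]<\infty ,
\]
so $H(F_\cdot(\cdot),G_\cdot(\cdot))\in\mathscr{L}$ and is in particular integrable against $ds\,\nu(dz)\,dP$. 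Since the compensator of $N(dsdz)$ is $ds\,\nu(dz)$ — equivalently, applying the last assertion of Theorem \ref{thm:000} to the scalar processes $\phi_s(z)=H(F_s(z),G_s(z))$ and $\phi_s(z)=H^2(F_s(z),G_s(z))$ — one obtains
\[
E\Big[\int_0^{t+}\!\!\int_Z H^k(F_s(z),G_s(z))\,N(dsdz)\Big]=E\Big[\int_0^{t}\!\!\int_Z H^k(F_s(z),G_s(z))\,ds\,\nu(dz)\Big],\qquad k=1,2 .
\]

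For \eqref{eqn:111} I would just take expectations in the almost-sure inequality \eqref{eqn:110} of Lemma \ref{lem:inequality1} and then insert the identity above with $k=1$. For \eqref{eqn:112} I would first square \eqref{eqn:110}, giving a.s.
\[
H^2\Big(\int_0^{t+}\!\!\int_Z F_s(z)N(dsdz),\int_0^{t+}\!\!\int_Z G_s(z)N(dsdz)\Big)\le\Big(\int_0^{t+}\!\!\int_Z H(F_s(z),G_s(z))N(dsdz)\Big)^2 ,
\]
take expectations, and then apply the second moment inequality of Theorem \ref{thm:000} (the one for $N(dsdz)$) with $f_\tau(z)=H(F_\tau(z),G_\tau(z))$, obtaining
\[
E\Big[\Big(\int_0^{t+}\!\!\int_Z H(F_s(z),G_s(z))N(dsdz)\Big)^2\Big]\le C\int_0^{t}\!\!\int_Z E\big[H^2(F_s(z),G_s(z))\big]\,ds\,\nu(dz);
\]
combining this with the $k=2$ case of the compensator identity finishes \eqref{eqn:112}. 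This last computation is the exact analogue of Step 1 of the proof of Lemma \ref{lem:4}, with $\|F_s(z)\|_{\bf K}$ replaced throughout by $H(F_s(z),G_s(z))$.

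The computations are routine once the preliminary facts are in place, so the only point requiring a little care is the $\mathscr{S}$-predictability of $H(F_\cdot(\cdot),G_\cdot(\cdot))$ (needed before Theorem \ref{thm:000} can be quoted) and the verification that this integrand lies in $\mathscr{L}$. I would settle predictability without appealing to a general measurable-selection argument by using the Castaing representations $F=\mathrm{cl}\{f^i\}$, $G=\mathrm{cl}\{g^m\}$ furnished by Theorem 3.5 of \cite{2012}, so that
\[
H(F_s(z),G_s(z))=\max\Big(\sup_i\inf_m\|f^i_s(z)-g^m_s(z)\|,\ \sup_m\inf_i\|f^i_s(z)-g^m_s(z)\|\Big),
\]
which is manifestly a countable sup/inf of $\mathscr{S}$-predictable real processes, hence $\mathscr{S}$-predictable. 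I do not anticipate any genuine obstacle beyond this bookkeeping, since the heart of the matter — both the pathwise comparison and the quadratic moment bound — is already available from Lemma \ref{lem:inequality1} and Theorem \ref{thm:000}.
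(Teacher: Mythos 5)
Your proposal is correct and follows essentially the same route as the paper: take expectations in the pathwise inequality of Lemma \ref{lem:inequality1} to get \eqref{eqn:111}, then square that inequality and apply the second-moment bound of Theorem \ref{thm:000} to the scalar integrand $H(F_s(z),G_s(z))$ (after checking it lies in $\mathscr{L}$ via $H(A,B)\le\|A\|_{\bf K}+\|B\|_{\bf K}$) to get \eqref{eqn:112}, with the compensator identity supplying the equalities. Your explicit verification of the $\mathscr{S}$-predictability of $H(F_\cdot(\cdot),G_\cdot(\cdot))$ via Castaing representations is a point the paper leaves implicit, but it is not a departure in method.
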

\begin{proof}
Since
\begin{equation*}
\begin{split}
 H\left(F(s,z,\omega), G(s,z,\omega)\right)
&\leq H\left(F(s,z,\omega), \{0\}\right)+H\left(G(s,z,\omega),
\{0\}\right)\\
&=\|F(s,z,\omega)\|_{\bf k}+\|G(s,z,\omega)\|_{\bf k},
\end{split}
\end{equation*}
\begin{equation*}
\begin{split}
 H^{2}\left(F(s,z,\omega), G(s,z,\omega)\right)
&\leq \left(H\left(F(s,z,\omega), \{0\}\right)+H\left(G(s,z,\omega),
\{0\}\right)\right)^{2}\\
&\leq 2\|F(s,z,\omega)\|_{\bf k}^{2}+2\|G(s,z,\omega)\|_{\bf k}^{2},
\end{split}
\end{equation*}
and $F, G\in\mathscr{M}$, therefore both
$E\left[\int_{0}^{T+}\int_{Z}H\left(F(s,z,\omega),
G(s,z,\omega)N(dsdz)\right)\right]$ and \\
$E\left[\int_{0}^{T+}\int_{Z}H^{2}\left(F(s,z,\omega),
G(s,z,\omega)N(dsdz)\right)\right]$ are finite.
 By taking expectation on both sides of \eqref{eqn:110},
  immediately we obtain that
  $$
E\left[H\left(\int_{0}^{T+}\int_{Z}F_{s}(z)N(dsdz),
\int_{0}^{T+}\int_{Z}G_{s}(z)N(dsdz)\right)\right]<\infty
  $$
  and \eqref{eqn:111} holds. By Theorem \ref{th:squarebounded}, we have
  that
$$E\left[H^{2}\left(\int_{0}^{T+}\int_{Z}F_{s}(z)N(dsdz),
\int_{0}^{T+}\int_{Z}G_{s}(z)N(dsdz)\right)\right]$$ is
finite. Then by \eqref{eqn:110} and Theorem \ref{thm:000}, we have
  \begin{equation*}
  \begin{split}
  &E\left[H^{2}\left(\int_{0}^{t+}\int_{Z}F_{s}(z)N(dsdz),
\int_{0}^{t+}\int_{Z}G_{s}(z)N(dsdz)\right)\right]\\
&\leq E\left[\left(\int_{0}^{t+}\int_{Z}H\left(F_{s}(z),
G_{s}(z)\right)N(dsdz)\right)^{2}\right]\\
&\leq C E\left[\int_{0}^{t}\int_{Z}H^{2}\left(F_{s}(z),
G_{s}(z)\right)ds\nu(dz)\right]\\
&=C E\left[\int_{0}^{t+}\int_{Z}H^{2}\left(F_{s}(z),
G_{s}(z)\right)N(dsdz)\right],
  \end{split}
  \end{equation*}
  which implies \eqref{eqn:112}.
\end{proof}
\section{ Set-valued stochastic integral equation}
\label{author_sec:4}
In this section, we study the strong solution to a set-valued
stochastic integral  equation. Assume $\frak X$ is a separable
M-type 2 Banach space, $\mathcal F$ is separable with respect to
$P$. $(Z, \mathcal{B}(Z))$ is a separable Banach space with finite
measure $\nu$. Let the functions

 $a(\cdot, \cdot): [0, T]\times\bf K(\frak X)\rightarrow\bf K(\frak X)$ be
$\big(\mathcal B\big([0, T]\big)\otimes\sigma(\mathcal
C)\big)/\sigma(\mathcal C)$-measurable,

 $b(\cdot,\cdot): [0, T]\times\bf K(\frak X)\rightarrow\frak X$
 be
 $\big(\mathcal B\big([0, T]\big)\otimes\sigma(\mathcal C)\big)/\mathcal B(\frak
 X)$-measurable, and

$c(\cdot,\cdot,\cdot): [0, T]\times Z\times \bf K(\frak
X)\rightarrow \bf K(\frak X)$
 be
 $\big(\mathcal B\big([0, T]\big)\otimes\mathcal{B}(Z)\otimes\sigma(\mathcal C)\big)/\sigma(\mathcal
 C)$-measurable.

 Let $\{X_{t}: t\in[0, T]\}$ be a $\mathcal {P}$-predictable set-valued stochastic
process. Then $X: [0, T] \times \Omega\rightarrow \bf K(\frak X)$
can be considered as a $\mathcal{P}/\sigma(\mathcal C)$-measurable
function. By the property of composition of mappings, as a manner
similar to the proof of Lemma 4.1 in \cite{Zha3},  we can obtain
that:

(1). $a(t, X_{t}(\omega)): [0, T]\times\Omega\rightarrow \bf K(\frak
X)$ is $\mathcal {P}$-predictable;

(2). $ b(t, X_{t}(\omega)): [0, T]\times\Omega\rightarrow \frak X $
is $\mathcal {P}$-predictable;

(3). $ c(t, z, X_{t}(\omega)): [0, T]\times Z\times\Omega\rightarrow
\bf K(\frak X) $ is $\mathscr{S}$-predictable.

 Assume the above functions $a, b, c$
also satisfy
 the following conditions :
 \begin{equation}\label{eqn:linear}
 \| a(t, X)\|_{\bf K}+\|b(t, X)\|+\int_{Z}\|c(t,z,X)\|_{\bf K}\nu(dz)\leq C_{1}\big(1+\|X\|_{\bf K}\big),
 \end{equation}
for $X\in{\bf K}(\frak X), t\in[0, T]$ and some constant $C_{1}$ and
\begin{equation}\label{eqn:lip}
H^{2}\big(a(t, X),a(t, Y)\big)+\|b(t, X)-b(t,
Y)\|^{2}
+\int_{Z}H^{2}(c(t,z,X), c(t,z,Y))\nu(dz)\leq C_{2} H^{2}(X, Y),
\end{equation}
for $X, Y\in {\bf K}(\frak X)$, $t\in [0, T]$ and some constant
$C_{2}$.

Let $X_{0}$ be an $L^{2}$-integrably bounded set-valued
 random variable, $\{B_{t}:t\in[0,T]\}$ a real valued Brownian
 motion and $N_{\bf p}$ a stationary Poisson point process with
 characteristic measure $\nu$. It is reasonable to
define the set-valued stochastic integral equation as follows:
\begin{definition}
\begin{equation}\label{eqn:equation}
X_{t}=cl\big\{X_{0}+\int_{0}^{t}a(s,X_{s})ds+\int_{0}^{t}b(s,X_{s})dB_{s}
+\int_{0}^{t+}\int_{Z}c(s,z,X_{s-})N(dzds)\big\},
\end{equation}
for $t\in[0,T]$ a.s.

 Suppose that $\{X_{t}:
t\in[0, T]\}$ is an $\mathcal F_{t}$-adapted and measurable
set-valued process, which is right continuous in $t$ with respect to
$H$ almost surely. Then it is called a {\em strong solution} if it
satisfies the equation \eqref{eqn:equation}.
\end{definition}

\begin{remark} There are four terms on the right hand side of equation
\eqref{eqn:equation}. Every term is measurable and bounded a.s. Then
 the closure of the sum is measurable
and bounded a.s. Thus the right hand side of formulae \eqref{eqn:equation} makes sense.

 If the initial value is not
only $L^{2}$-integrably bounded but also weakly compact in $\frak
X$, then it is not necessary to take the closure in the right hand
side in \eqref{eqn:equation}(cf. (4.3) in \cite{Zha3}).
\end{remark}

\begin{theorem}\label{thm:exist}
 Assume that $\mathcal F$ is separable with respect to $P$. Let $T>0$, and let $a(\cdot,\cdot):[0, T]\times
{\bf K}(\frak X)\rightarrow\bf K(\frak X)$,
 $b(\cdot,\cdot): [0, T]\times {\bf K}(\frak X)\rightarrow\frak X$ and
  $c(\cdot,\cdot,\cdot): [0, T]\times Z\times {\bf K}(\frak X)\rightarrow {\bf K}(\frak X)$ be measurable
 functions satisfying conditions \eqref{eqn:linear} and
 \eqref{eqn:lip}. Then for any given $L^{2}$-integrably bounded initial value $X_{0}$,
 there exists a unique
strong solution to \eqref{eqn:equation}. The
 unique strong solution is
 right continuous in $t$ with respect to the Hausdorff metric. In the
 above, the uniqueness means $P\Big(H(X_{t},Y_{t})=0\ for \ all\
 t\in[0,T]\Big)=1$ for any strong solutions $X_{t}$ and $Y_{t}$ to
 \eqref{eqn:equation}.
\end{theorem}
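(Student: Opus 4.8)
The plan is to construct the solution by a Picard iteration in a complete metric space of set-valued processes, using the moment estimates of Section \ref{author_sec:3} — crucially Theorem \ref{th:squarebounded}, which keeps the iterates $L^2$-integrably bounded, and Theorem \ref{thm:300}, which controls the jump term — together with the It\^o-type isometry and Doob's maximal inequality for the single-valued Brownian part, the Cauchy--Schwarz inequality for the drift, and Gronwall's lemma. First I would fix the space $\mathcal{H}$ of (equivalence classes of) $\mathcal F_t$-adapted, measurable, $H$-right-continuous set-valued processes $Y$ with $\|Y\|:=\big(E[\sup_{t\le T}\|Y_t\|_{\bf K}^2]\big)^{1/2}<\infty$, metrized by $\rho(Y,Y')=\big(E[\sup_{t\le T}H^2(Y_t,Y'_t)]\big)^{1/2}$; completeness of $\mathcal{H}$ follows from completeness of $\big({\bf K}_{b}(\frak X),H\big)$. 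Set $X^{(0)}_t\equiv X_0$ and define inductively
$$X^{(n+1)}_t = cl\Big\{X_0 + \int_0^t a(s,X^{(n)}_s)\,ds + \int_0^t b(s,X^{(n)}_s)\,dB_s + \int_0^{t+}\!\!\int_Z c(s,z,X^{(n)}_{s-})\,N(dz\,ds)\Big\}.$$

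Next I would check the iteration is well defined inside $\mathcal{H}$. By the measurability facts (1)--(3) preceding the theorem and the linear growth bound \eqref{eqn:linear}, if $X^{(n)}\in\mathcal{H}$ is $\mathcal P$-predictable then $a(\cdot,X^{(n)}_\cdot)$ and $c(\cdot,\cdot,X^{(n)}_{\cdot-})$ belong to $\mathscr{M}$ and $b(\cdot,X^{(n)}_\cdot)\in\mathscr{L}$, so each term on the right is well defined; by Theorem \ref{th:squarebounded} (jump integral), the M-type 2 inequality for the Brownian integral, Jensen's inequality (drift), and the additivity of set-valued integrals, the closure of the sum is again an $L^2$-integrably bounded, $\mathcal F_t$-adapted, measurable, $H$-right-continuous set-valued process, so $X^{(n+1)}\in\mathcal{H}$. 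A Gronwall argument applied to $\psi_n(t):=E[\sup_{s\le t}\|X^{(n)}_s\|_{\bf K}^2]$, using \eqref{eqn:linear}, then gives $\sup_n\sup_{t\le T}\psi_n(t)<\infty$.

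The heart of the proof is the contraction estimate. Writing $\Delta_n(t):=E[\sup_{s\le t}H^2(X^{(n+1)}_s,X^{(n)}_s)]$, I would use the subadditivity of $H$ under Minkowski sums and closures (the $X_0$ terms cancel) to bound $H(X^{(n+1)}_s,X^{(n)}_s)$ by the sum of the Hausdorff distances of the three integral terms; then estimate the drift term via $\big(\int_0^t H(a(s,X^{(n)}_s),a(s,X^{(n-1)}_s))\,ds\big)^2\le T\int_0^t H^2(a(s,X^{(n)}_s),a(s,X^{(n-1)}_s))\,ds$, the single-valued diffusion term via Doob's maximal inequality and the It\^o-type isometry, and the jump term via inequality \eqref{eqn:112} of Theorem \ref{thm:300}. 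The Lipschitz condition \eqref{eqn:lip} then yields $\Delta_n(t)\le K\int_0^t\Delta_{n-1}(s)\,ds$ for a constant $K$ depending on $T,\nu(Z),C_{\frak X},C_2$, hence $\Delta_n(T)\le (KT)^n/n!\cdot\Delta_0(T)\to 0$. Thus $\{X^{(n)}\}$ is Cauchy in $\mathcal{H}$, converging to some $X$; since the convergence is uniform in $t$, along a subsequence it is a.s.\ uniform, so $X$ inherits right continuity in $t$ with respect to $H$.

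Finally, passing to the limit, the same three estimates with \eqref{eqn:lip} give $\int_0^t a(s,X^{(n)}_s)\,ds\to\int_0^t a(s,X_s)\,ds$, $\int_0^t b(s,X^{(n)}_s)\,dB_s\to\int_0^t b(s,X_s)\,dB_s$ and $\int_0^{t+}\!\int_Z c(s,z,X^{(n)}_{s-})\,N(dz\,ds)\to\int_0^{t+}\!\int_Z c(s,z,X_{s-})\,N(dz\,ds)$ in the appropriate $L^2$ sense; since $cl$ is $H$-continuous on bounded sets, $X$ satisfies \eqref{eqn:equation}. For uniqueness, two strong solutions $X,Y$ satisfy the same contraction bound $E[\sup_{s\le t}H^2(X_s,Y_s)]\le K\int_0^t E[\sup_{r\le s}H^2(X_r,Y_r)]\,ds$, so Gronwall forces $E[\sup_{t\le T}H^2(X_t,Y_t)]=0$, i.e.\ $P\big(H(X_t,Y_t)=0\text{ for all }t\in[0,T]\big)=1$. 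I expect the main obstacle to be the bookkeeping of the contraction and well-definedness steps — in particular verifying that the iterates stay $L^2$-integrably bounded (exactly what Theorem \ref{th:squarebounded} provides, and the reason the jump coefficient may be set-valued while $b$ must be single-valued) and combining the three structurally different integral estimates into one Gronwall inequality with uniform constants.
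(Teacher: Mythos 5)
Your proposal is correct and follows essentially the same route as the paper: Picard iteration, splitting $H(X^{(n+1)}_t,X^{(n)}_t)$ into the drift, Brownian, and jump contributions, estimating these via Cauchy--Schwarz, Doob's maximal inequality with the M-type 2 constant, and Theorem \ref{thm:300} respectively, then the Lipschitz condition \eqref{eqn:lip} and the factorial bound $\Delta_n(T)\le (KT)^n/n!\,\Delta_0(T)$, with convergence in the complete space built on $\big({\bf K}_b(\frak X),H\big)$ and the same Gronwall-type uniqueness argument. Your additional bookkeeping (well-definedness of the iterates in $\mathcal H$, uniform $L^2$-boundedness, and the explicit passage to the limit in the equation) only makes explicit steps the paper leaves implicit.
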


\begin{proof}
 As a manner similar to that of solving single-valued stochastic differential equation,
  we use the successive approximation method to construct a
solution of equation \eqref{eqn:equation}.

Define $Y_{t}^{0}=X_{0}$, and
 $Y_{t}^{k}=Y_{t}^{k}(\omega)$ for $k\in\mathbb N$ inductively as follows:
\begin{equation}
Y_{t}^{k+1}=cl\Big\{X_{0}+\int_{0}^{t} a(s, Y^{k}_{s})ds
+\int_{0}^{t} b(s,
Y^{k}_{s})dB_{s}
+\int_{0}^{t+}\int_{Z}c(s,z,Y^{k}_{s-})N(dzds)\Big\}.
\end{equation}
By property of Hausdorff metric, we have
\begin{equation*}
\begin{split}
&H(Y_{t}^{k+1}, Y_{t}^{k})
\leq
H\Big(\int_{0}^{t}a(s,Y_{s}^{k})ds,\int_{0}^{t}a(s,Y_{s}^{k-1})ds\Big)
+\big\|\int_{0}^{t}\big(b(s,Y_{s}^{k})-b(s,Y_{s}^{k-1})\big)dB_{s}\big\|\\
&+H\Big(\int_{0}^{t+}\int_{Z}c(s,z,Y_{s-}^{k})N(dzds), \int_{0}^{t+}\int_{Z}
c(c,s,Y^{k-1}_{s-}))N(dzds)\Big).
\end{split}
\end{equation*}
\begin{equation*}
\begin{split}
&E\Big[\sup_{s\in[0,t]}H^{2}(Y_{s}^{k+1}, Y_{s}^{k})\Big]\\
\leq
3E\Big[&\sup_{s\in[0,t]}H^{2}\big(\int_{0}^{s}a(\tau,Y_{\tau}^{k})d\tau,
\int_{0}^{s}a(\tau,Y_{\tau}^{k-1})d\tau\big)
+\sup_{0\leq s\leq
t}\big\|\int_{0}^{s}b(\tau,Y_{\tau}^{k})dB_{\tau}-\int_{0}^{s}b(\tau,Y_{\tau}^{k-1})dB_{\tau}\big\|^{2}\\
&+\sup_{0<s\leq
t}H^{2}\Big(\int_{0}^{t+}\int_{Z}c(s,z,Y_{s-}^{k})N(dzds),
\int_{0}^{t+}\int_{Z} c(c,s,Y^{k-1}_{s-})N(dzds)\Big)\Big].
\end{split}
\end{equation*}
By condition \eqref{eqn:lip}
 and Doob maximal martingale inequality, we have
\begin{equation} \label{eqn:113}
E\Big[\sup_{0\leq s\leq
t}H^{2}\big(\int_{0}^{s}a(\tau,Y_{\tau}^{k})d\tau,
\int_{0}^{s}a(\tau,Y_{\tau}^{k-1})d\tau\big)\Big]\leq
TC_{2}E\Big[\int_{0}^{t}H^{2}(Y_{\tau}^{k},Y_{\tau}^{k-1})d\tau\Big]
\end{equation}
\begin{equation}\label{eqn:114}
 E\Big[\sup_{0\leq s\leq
t}\big\|\int_{0}^{s}b(\tau,Y_{\tau}^{k})dB_{\tau}-\int_{0}^{s}b(\tau,Y_{\tau}^{k-1})dB_{\tau}\big\|^{2}\Big]
 \leq
4C_{\frak
X}C_{2}E\Big[\int_{0}^{t}H^{2}(Y_{\tau}^{k},Y_{\tau}^{k-1})d\tau\Big].
\end{equation}
 By Theorem \ref{thm:000}, Lemma \ref{lem:inequality1}, Theorem \ref{thm:300} and condition
 \eqref{eqn:lip},
 \begin{equation*}
 \begin{split}
 &E\Big[\sup_{0<s\leq
t}H^{2}\Big(\int_{0}^{t+}\int_{Z}c(s,z,Y_{s-}^{k})N(dzds),
\int_{0}^{t+}\int_{Z} c(c,s,Y^{k-1}_{s-})N(dzds)\Big)\Big]\\
&\leq E\Big[\sup_{0<s\leq
t}\Big(\int_{0}^{t+}\int_{Z}H\big(c(s,z,Y_{s-}^{k}),
c(c,s,Y^{k-1}_{s-})\big)N(dzds)\Big)^{2}\Big]\\
&\leq CE\Big[\int_{0}^{t+}\int_{Z}H^{2}\big(c(s,z,Y_{s-}^{k}),
c(c,s,Y^{k-1}_{s-})\big)ds\nu(dz)\Big]\\
&=CE\Big[\int_{0}^{t+}\big(\int_{Z}H^{2}\big(c(s,z,Y_{s-}^{k}),
c(c,s,Y^{k-1}_{s-})\big)\nu(dz)\big)ds\Big]
\leq CC_{2}E\Big[\int_{0}^{t}H^{2}(Y^{k}_{\tau},
Y^{k-1}_{\tau})d\tau\Big].
 \end{split}
 \end{equation*}
 Therefore, we obtain
 \begin{equation*}
E\Big[\sup_{s\in[0,t]}H^{2}(Y_{s}^{k+1}, Y_{s}^{k})\Big]\leq
(3TC_{2}+12C_{\frak
X}C_{2}+3CC_{2})E\Big[\int_{0}^{t}H^{2}(Y^{k}_{\tau},
Y^{k-1}_{\tau})d\tau\Big]
 \end{equation*}
Setting $c:=9C_{2}(T\vee 4C_{\frak X}\vee C)$ and
$\bigtriangleup_{k}(t):=E\Big[\sup_{s\in[0,t]}H^{2}(Y_{s}^{k+1}, Y_{s}^{k})\Big],$
 then by induction, we have
 \begin{eqnarray*}
 &&\bigtriangleup_{k}(T)=E\Big[\sup_{s\in[0,T]}H^{2}(Y_{s}^{k+1},
 Y_{s}^{k})\Big]
 \leq
 c\int_{0}^{T}\bigtriangleup_{k-1}(\tau)d\tau\\
 &&\leq
 c^{k}\int_{0}^{T}\int_{0}^{\tau_{k-1}}\cdots \int_{0}^{\tau_{1}}\bigtriangleup_{0}(\tau_{0})d\tau_{0}\cdots d\tau_{k-2}d\tau
 \leq
 c^{k}\bigtriangleup_{0}(T)\int_{0}^{T}\int_{0}^{\tau_{k-1}}\cdots \int_{0}^{\tau_{1}}d\tau_{0}\cdots d\tau_{k-2}d\tau.
 \end{eqnarray*}
Hence, we obtain
$\bigtriangleup_{k}(T)\leq \frac{(cT)^{k}}{k!}\bigtriangleup_{0}(T).$
Therefore, the series $\sum_{k=1}^{\infty}\bigtriangleup_{k}(T)$
converges. Then
$$\sum_{k=1}^{\infty}\sup_{t\in[0,T]}H^{2}(Y^{k}_{t},Y^{k-1}_{t})<+\infty \ a.s.,$$
which implies the sequence $\{Y_{t}^{k}:k\in\mathbb N\}$ uniformly
(with respect to $t$) converges to a set-valued stochastic process
denoted by $\{Y_{t}: t\in[0,T]\}$ by the completeness of the space
$L^{2}\big(\Omega; ({\bf {K}}_{b}(\frak X), H)\big)$. Since both the integral of set-valued stochastic processes with respect to Lebesgue measure $t$ and the integral with respect to Brownian motion are continuous in $t$, together with Theorem \ref{thm:300}, we obtain that the process
$\{Y_{t}\}$ is right continuous in $t$ with respect to the Hausdorff
metric $H$ and satisfies \eqref{eqn:equation}.

Now we show the uniqueness of solutions. Assume there are two
solutions $\{X_{t}:t\in[0,T]\}$ and $\{Y_{t}:t\in[0,T]\}$ with the
same initial value $X_{0}$. Denote
$\bigtriangleup(t)=E\Big[\sup_{s\in[0,t]}H^{2}(X_{s},Y_{s})\Big].$
Then through the same way as above, we have
$\bigtriangleup(T)\leq \frac{(cT)^{k}}{k!}\bigtriangleup(T).$
Letting $k\rightarrow \infty$, we obtain $\bigtriangleup(T)=0$,
which implies $P\Big(H(X_{t},{Y_{t}})=0\ for \ all\
t\in[0,T]\Big)=1$.
\end{proof}
\section{Concluding remark}
\label{author_sec:5}
The main result of this paper is that the set-valued integral with respect to the compensated Poisson measure is not a martingale unless the integrand degenerates into a single-valued process. The proof uses the Hahn decomposition of a Banach space and bounded linear functionals. Since integrals with respect to Poisson point process are integrably bounded, the differential equation with set-valued jump makes sense.  Due to the complexity in real world, set-valued random variable is a good tool to model the uncertainty including both randomness and imprecision. We expect that the model \eqref{eqn:equation} has potential applications to practical fields. For instance, single-valued stochastic calculus has surprising applications in mathematical finance and dynamics \cite{Kunita2}. It is also reasonable to consider the price of finance derivative as an interval-valued random variable due to high frequency fluctuations and unseen events. Ogura \cite{Ogura} studied the set-valued  Black-Scholes equation. Sometimes there is a big change of price since some unusual and unpredictable causes. A possible model for this situation is set-valued stochastic differential equation with jump, which is a natural extension of the equation in \cite{Ogura}.  Another example of potential application is on detection of echo signal of a sea clutter, which is very important in the defense and civilian business. Due to the fluid dynamics, classical stochastic differential equation is used to modelling the echo signal's phase and amplitude (\cite{Ward}). The sea surface may have a big change during a very short period since the complex fluid dynamics or the sudden strong wind. It is reasonable to consider the amplitude of sea clutter as a set-valued process. The sharp change of sea surface can be described as a Poisson jump.

\vskip 0.6cm


\bibliographystyle{model1a-num-names}
\bibliography{<your-bib-database>}

\begin{thebibliography}{00}

\bibitem{Aum}
 R. Aumann, Integrals of set-valued functions,
 {\small \it J.Math.Anal.Appl.} {\small \bf 12} (1965) 1-12.

\bibitem{Bro}
J.K. Brooks and N. Dinculeanu, Weak compactness in spaces of Bochner
integrable functions and applications, {\small\it Advances in Mathematics} {\small\bf 24}
(1977) 172-188.

\bibitem{Brz}
Z. Brze$\acute{z}$niak, A. Carroll, Approximations of the Wong-Zakai
differential equations in M-type 2 Banach spaces with applications
to loop spaces, {\small\it S$\acute{e}$minaire de Probabiliti$\acute{e}$s},
XXXVII (2003) 251-289.

\bibitem{Cha}
 D.A. Charalambos and C.B. Kim, Infinite
Dimensional Analysis, Springer-Verlag, Berlin, 1994.

 \bibitem{Cas}
 C. Castaing and M. Valadier, Convex Analysis and Measurable
Mutilfunctions, {\small\it Lecture Notes in Math } {\small\bf 580} Springer-Verlag,
Berlin, 1977.

 \bibitem{Dett}
E. Dettweiler, A characterization of the Banach spaces of type $p$
by L\'{e}vy measures,{\small\it Math.Z.} {\small\bf 157} (1977) 121-130.

\bibitem{Hia}
 F. Hiai and H. Umegaki, Integrals, conditional
expectations and martingales of multivalued functions, {\small\it Jour. Multivar.
Anal.} {\small\bf 7} (1977) 149-182.

\bibitem{Hiai}
 F. Hiai, Convergence of conditional
expectations and strong laws of large numbers for multivalued random
variables, {\small\it Trans. A.M.S.} {\small\bf 291} (1985) 613-627.

\bibitem{Honda1}
A. Honda and Y. Okazaki, Theory of inclusion-exclusion integral, {\small\it Information Sciences,} {\small \bf 376} (2017): 136-147.

\bibitem{Honda2}
A. Honda, Y. Okazaki and Y. Takahashi,  A generalization of the Hanner's inequality and the type 2 (cotype 2) constant of a Banach space, {\small\it Bulletin of the Kyushu Institute of Technology, Pure and Applied Mathematics} {\small\bf 44} (1995) 29-34.

\bibitem{Ike}
 N. Ikeda and S. Watanabe, Stochastic Differential
Equations and Diffusion Processes,
 North-Holland publishing company, 1981.

\bibitem{Jun}
 E.J. Jung and J.H. Kim, On set-valued stochastic
integrals, {\small\it Stoch Anal Appl} {\small\bf 21 (2)} (2003) 401- 418.

\bibitem{Kim2}
 B.K. Kim and J.H. Kim, Stochastic integrals of set-valued processes and fuzzy
 processes,
{\small\it J.Math.Anal.Appl.} {\small\bf 236} (1999)
480-502.

\bibitem{Kis}
 M. Kisielewicz, Set-valued stochastic integrals and
stochastic inclusions, {\small\it Stoch Anal Appl} {\small\bf 15} (1997) 780-800.

\bibitem{Kunita1}
H. Kunita, Stochastic Differential Equations and
 Stochastic Flows of Diffeomorphisms,
 {\small \it Lect Notes in Math} {\small\bf 1097}, Springer, 1984.

\bibitem{Kunita2}
H. Kunita, It$\hat o$'s stochastic caculus: Its surprising power for
applications, {\small\it Stochastic Processes and Their Applications} {\small\bf 120} (2010)
622-652.

\bibitem{Li2}
S. Li and A. Ren, Representation theorems, set-valued and fuzzy
set-valued It$\hat o$ Integral, {\small \it Fuzzy Sets and Systems} {\small \bf 158} (2007)
949-962.

\bibitem{Li1}
S. Li, Y. Ogura and V. Kreinovich, Limit Theorems and Applications of
Set-Valued and Fuzzy Sets-Valued Random Variables, Kluwer Academic
Publishers, 2002.

\bibitem{Li2010}
J. Li, S. Li and Y. Ogura, Strong solution of It\^o type set-valued
stochastic differential equation, {\small\it Acta Mathematica Sinica, English
Series } {\small\bf 26(9)} (2010) 1739-1748.

\bibitem{M.M}M.Malinowski and M.Michta, Set-valued stochastic integral equations driven by martingales,
 {\small\it J.Math.Anal.Appl.}, {\small\bf 394 (12)} (2012) 30-47.

\bibitem{Meg}
R.E. Megginson, An Introduction to Banach Space Theory, Springer,
New York, 1998.

\bibitem{Mic}
M. Michta, On set-valued stochastic integrals and fuzzy stochastic
equations, {\small \it Fuzzy Sets and Systems } {\small\bf 177 }(2011) 1-19.

\bibitem{Mitoma}
  I. Mitoma, Y. Okazaki and J. Zhang, Set-valued stochastic
differential equations in M-type 2 Banach space, {\small\it Communications on
Stochastic Analysis} {\small\bf 4(2)} (2010) 215-237.

\bibitem{Mol}
I. Molchanov, Theory of Random Sets, Springer-Verlag, London, 2005.

\bibitem{Ogura}
Y. Ogura, On stochastic differential equations with set coeffients
and the Black-Scholes model, in: {\small\it Proceedings of the Eighth
International Conference on Intelligent Technologies}, 2008, pp:
263-270.

\bibitem{pedro} P. Ter$\acute{a}$n, Distributions of random closed sets via containment functional,{\small\it Nonlinear Convex Anal.} {\small\bf 15(5)}(2014) 907-917.

\bibitem{ren2010}
J. Ren, J. Wu and X. Zhang, Exponential ergodicity of non-Lipschitz multivalued
stochastic differential equations, {\small\it Bull. Sci. math.} Vol {\small\bf 134} (2010) 391-404.

\bibitem{ren20091}
J. Ren and S. Xu, A transfer principle for multivalued stochastic
differential equations, {\small \it Journal of Functional Analysis} Vol {\small\bf 256} (2009) 2780-2814.

\bibitem{ren2011}
J. Ren and J. Wu, Multi-valued Stochastic Differential Equations
Driven by Poisson Point Processes, {\small \it Progress in Probability} Vol {\small \bf 65} (2011) 191-205.

\bibitem{sato}
K.I. Sato, L\'{e}vy Processes and Infintely Divisible Distributions,
Cambridge University Press, 1999.

\bibitem{Ward}
K.D.Ward, R.J.A.Tough and S.Watts,  Sea Clutter: Scattering, the K Distribution and Radar Performance, The institute of engineering and technology, London, 2006.

\bibitem{Watanabe}
S. Watanabe, It$\hat{o}$'s theory of excursion point processes and
its developments, {\small\it Stochastic Processes and their Applications} {\small\bf 120}
(2010) 653-677.

\bibitem{Zha1}
J. Zhang, Set-valued stochastic integrals with respect to a real
valued martingale. In: {\small\it Soft Method for Handling Variability and
Imprecision} {\small\bf  ASC 48}, Springer-Verlag, Berlin Heidelberg, 2008, pp:
253-259.

\bibitem{2012} J. Zhang, I. Mitoma and Y. Okazaki, Set-valued stochastic integral with
respect to Poisson process in a Banach space, {\small\it International Journal
of Approximate Reasoning} {\small\bf  54 (3)} (2013) 404-417.

\bibitem{Zha4}
J. Zhang, S. Li, I. Mitoma and Y. Okazaki,  On Set-Valued Stochastic
Integrals in an M-type 2 Banach Space,{\small\it J. Math. Anal. Appl.} {\small\bf 350}
 (2009) 216-233.

\bibitem{Zha3}
 J. Zhang, S. Li, I. Mitoma and Y. Okazaki, On the solution of
set-valued stochastic differential equations in M-type 2 Banach
space, {\small\it Tohoku Mathematical Journal} {\small\bf 61} (2009) 417-440.




 \end{thebibliography}



\end{document}